\documentclass[final]{siamltex}
\usepackage{animate}
\usepackage{amsmath}
\usepackage{amssymb}
\usepackage{graphics}
\usepackage{subfigure}
\usepackage{graphicx}
\usepackage{textcomp}
\usepackage{mathrsfs}
\usepackage{epstopdf}
\usepackage{array}
\usepackage{cite}
\usepackage[maxfloats=99]{morefloats}
\usepackage{color}
\usepackage{url}
\usepackage{cases}
\usepackage{mathscinet}






\usepackage[T1]{fontenc}
\usepackage{textcomp}
\usepackage[scaled]{helvet}

\newtheorem{remark}{Remark}[section]

\title{Superconvergence analysis of partially penalized immersed finite element method}
\author{Hailong Guo\thanks{ Department of Mathematics, University of California, Santa Barbara, CA 93106 (hlguo@math.ucsb.edu).}
    \and Xu Yang\thanks{Department of Mathematics, University of California, Santa Barbara, CA 93106 (xuyang@math.ucsb.edu). This work was partially supported by the NSF grant DMS-1418936 and Hellman Family Foundation Faculty Fellowship, UC Santa Barbara.}
    \and Zhimin Zhang\thanks{Beijing Computational Science Research Center, Beijing 100193 (zmzhang@csrc.ac.cn) and Department of Mathematics, Wayne State University, Detroit, MI 48202 (zzhang@math.wayne.edu). The research of this author was supported in part by the National Natural Science Foundation of China under grants 11471031, 91430216, U1530401, and the U.S. National Science Foundation through grant DMS-1419040.}
}

\begin{document}

\maketitle

%
%
\medskip

\begin{abstract}
The contribution of this paper contains two parts: first, we prove a supercloseness result for the partially penalized immersed finite element (PPIFE) method in [T. Lin, Y. Lin, and X. Zhang, SIAM J. Numer. Anal., 53 (2015), 1121--1144]; then based on the supercloseness result, we show that the gradient recovery method proposed in our previous work [H. Guo and X. Yang, arXiv: 1608.00063] can be applied to the PPIFE method and the recovered gradient converges to the exact gradient with a superconvergent rate $\mathcal{O}(h^{3/2})$.  Hence,
the gradient recovery method provides an asymptotically exact {\it a posteriori} error estimator for the PPIFE method.  Several numerical examples
are presented to verify our theoretical result.
\vskip .3cm
{\bf AMS subject classifications.} \ {Primary 35R05, 65N30; Secondary 65N15}
\vskip .3cm

{\bf Key words.} \ {superconvergence, interface problem, immersed finite element, supercloseness, gradient recovery.}
\end{abstract}


\section{Introduction}
Recently there has been of great interest in developing finite element method for interface problems where the discontinuous coefficients
appear naturally due to the background consisting of rather different materials; see, e.g.,\cite{BastianEngwer2009,Babuska1970,BarrettElliott1987,BrambleKing1996,CaiZhang2009,ChenDai2002,ChenZou1998,GongLiLi2007,Hansbo2002,HouLiu2005,HouWuZhang2004,Li1998,LiLinLin2004,LiLinWu2003,LiIto2006,LinLinZhang2015,Xu1982}.   It is well known that classical finite element method work
for interface problems provided that the mesh is aligned with the interface \cite{Babuska1970, BrambleKing1996,ChenZou1998,Xu1982}.
Such requirement may be a heavy burden especially when the interface involves complex geometry 
 and therefore it is  difficult and time-consuming to generate a body-fitted mesh.
To release the restriction,  Li proposed an immersed finite element (IFE) method for  the two-point boundary value problem \cite{Li1998}.
This idea was further generalized into  two-dimensional cases by Li, Lin, and Wu who constructed a nonconforming IFE method for
interface problems \cite{LiLinWu2003}.   
The main idea  of IFE is to solve interface problems on the Cartesian mesh (or uniform mesh) by  modifying basis functions near the interface.

The optimal approximation capability of IFE space was justified in \cite{LiLinLin2004}.  
However, there is no proof  for the optimal convergence  of the classical IFE method in the two-dimensional setting,  
see \cite{LinLinZhang2015}, even though plentiful numerical experiments showed optimal convergence for elliptic equations.  Interested readers are referred to \cite{ChouwKwakWee2010,HeLinLin2012,KwakWeeChang2010} for the progress of theoretical results.  
Moreover, numerical test results demonstrated that classic IFE method  
\cite{JiChenLi2014} achieves only the first order convergence in  the $L^{\infty}$-norm.
 There is relatively larger point-wise error over interfaces due to the discontinuities of test functions.  
To eliminate this disadvantage, the authors of \cite{JiChenLi2014} added a correction term into the bilinear form 
of the classic IFE method and  \cite{HouLiu2005,HouSongWangZhao2013,HouWuZhang2004} proposed a new IFE formulation  in the framework of the Petrov-Galerkin method.
However,  the  theoretical foundation of their methods is  not fully established.  Alternatively, Lin, Lin, and Zhang \cite{LinLinZhang2015} proposed PPIFE method to penalize the inter-element discontinuity. Thanks to the added penalty term, the authors proved the coercivity   of the bilinear form and showed the optimal convergence in the energy norm.

Superconvergence is an active research topic in the finite element community and its theory for smooth problems is well established,
see, e.g., \cite{Babuska2001, BankXu2003, Chen2001, ChenHuang1995, ChenXu2007, Lakhany2000, GuoZhang2015, Guo2016b, LinYan1996, NagaZhang2004, NagaZhang2005, Wahlbin1995, WuZhang2007, XuZhang2004, ZhangNaga2005, ZZ1992a, ZZ1992b, ZhuLin1989}, and
references therein. On the other hand, however, the superconvergence phenomena for interface problems
is not yet well understood  due to discontinuing of the coefficient crossing the interface.
  In \cite{WeiChenHuangZheng2014},  a supercloseness result between the linear finite element solution and its linear interpolation is proved for a two dimensional interface problem with body-fitted mesh.  Recently, the first two authors proposed  an immersed polynomial preserving recovery (IPPR) for interface problem and proved the superconvergence on both mildly unstructured mesh and adaptively refined mesh \cite{GuoYang2016}. For IFE method, Chou et al. introduced two special
interpolation formula to recover flux more accurately  for the one-dimensional linear and quadratic IFE elements
\cite{Chou2012,Chou2015}. In \cite{CaoZhangZhang2015}, Cao et al. investigated nodal superconvergence phenomena
using generalized orthogonal polynomial  in the one-dimensional setting.
For the two-dimensional case, the first two authors  proposed a new gradient recovery technique \cite{GuoYang2016b} for symmetric and consistent IFE method \cite{JiChenLi2014}
and Petrov-Galerlin  IFE method \cite{HouLiu2005,HouSongWangZhao2013,HouWuZhang2004} and numerically verified its superconvergence.  In addition, \cite{GuoYang2016b}
numerically showed  supercloseness results for both symmetric and consistent IFE method  and  Petrov-Galerlin  IFE method.

The main goal of this work is to establish a complete superconvergence theory for  the PPIFE method \cite{LinLinZhang2015}.  
Our analysis relies on the following three key observations: 
1) the solution is piecewise smooth on each sub-domain despite of its low global regularity;
2)  the basis functions on non-interface elements are just basis functions for standard linear finite element  method; 
3) the number of interface elements is  roughly $\mathcal{O}(h^{-1})$.
The above three observations motivate us to divide elements into disjoint types:  
interior elements, exterior elements, and  interface elements. 
We can obtain the supercloseness using  well-known results in \cite{BankXu2003, ChenXu2007, XuZhang2004} 
on interior and exterior elements, respectively.
In addition, the trace inequalities for the IFE functions in \cite{LinLinZhang2015} and the third observation enable us to establish 
$\mathcal{O}(h^{1.5})$ supercloseness result for interface elements. 
Our supercloseness result reduces to the standard one as in \cite{BankXu2003, ChenXu2007, XuZhang2004} when
the discontinuity disappears. 
It is consistent with the fact that IFE method becomes the standard linear finite element method when the discontinuity disappears.
Furthermore, we show that the gradient recovery method in \cite{GuoYang2016b} can also be applied to the PPIFE method.  
The recovered gradient is proven to be superconvergent to the exact gradient of the interface problem, and therefore,  
 provides  an asymptotically exact {\it a posterior} error estimator for PPIFE method.

The rest of the paper is organized as follows. In Section 2,  we introduce  the model interface problem and the PPIFE method.
In Section 3,  we first establish the supercloseness between gradients of the PPIFE solution and the exact solution to the interface problem, and then   based the supercloseness,  we  prove the recovered gradient using the method in \cite{GuoYang2016b} is superconvergent
to the exact gradient. Then provides an asymptotically exact {\it a posteriori} error estimator for PPIFE method.  
In Section 4, we present some numerical  experiments to support our theoretical result. 
Finally, we make some conclusive remarks in Section 5.

\section{Preliminary}
In this section, we shall introduce the elliptic interface problem, and  its its discrete form using the PPIFE method\cite{LinLinZhang2015}.
\subsection{Elliptic interface problem}
Let $\Omega$ be a bounded polygonal domain with  Lipschitz boundary $\partial \Omega$ in $\mathbb{R}^2$.   
A $C^2$-curve $\Gamma$ divides $\Omega$ into two disjoint subdomains $\Omega^-$ and $\Omega^+$, 
which is typically characterized by zero level set of some level set function $\phi$ \cite{Osher2003, Sethian1996},
with $\Omega^- = \{z\in \Omega|\phi(z) <0\}$ and $\Omega^+ = \{z\in \Omega|\phi(z) >0\}$.
We shall consider the following elliptic interface problem
\begin{align}
  -\nabla \cdot (\beta(z) \nabla u(z)) &= f(z),  \quad z \text{ in } \Omega\setminus \Gamma, \label{eq:model}\\
   u & = 0, \quad\quad\,\, z \text{ on } \partial\Omega, \label{eq:bnd}
\end{align}
where the diffusion coefficient $\beta(z) \ge \beta_0$ is a piecewise smooth function, i.e.
\begin{equation}
\beta(z) =
\left\{
\begin{array}{ccc}
    \beta^-(z) &  \text{if } z\in \Omega^-, \\
   \beta^+(z)  &   \text{if } z\in \Omega^+,
\end{array}
\right.
\end{equation}
which has a finite jump of function values across the interface $\Gamma$. At the interface $\Gamma$,
one has the following jump conditions
\begin{align}
   [u]_{\Gamma} &= u^+-u^-=0, \label{eq:valuejump}\\
   [\beta u_n]_{\Gamma} &= \beta^+u_n^+ - \beta^-u^-_n = g, \label{eq:fluxjump}
\end{align}
where $u_n$ denotes the normal flux $\nabla u\cdot n$ with $n$ as the unit outer normal vector of the interface $\Gamma$.

In this paper, we use  the standard
notations for Sobolev spaces and their associate norms given in \cite{BrennerScott2008, Ciarlet2002, Evans2008}.
For a subdomain $A$
of $\Omega$, let $\mathbb{P}_m(A)$ be the space of polynomials of
degree less than or equal to $m$ in $A$ and $n_m$ be the
dimension of $\mathbb{P}_m(A)$ which equals to $\frac{1}{2}(m+1)(m+2)$.
$W^{k,p}(A)$ denotes the Sobolev space with norm
$\|\cdot\|_{k, p, A} $ and seminorm $|\cdot|_{k, p,A}$.
 When $p = 2$, $W^{k,2}(A)$ is simply  denoted by $H^{k}(A)$
 and the subscript $p$ is omitted in its associate norm and seminorm.
  As in \cite{WeiChenHuangZheng2014}, denote
 $W^{k,p}(\Omega^-\cup\Omega^+)$  as the function space consisting of piecewise Sobolev  function $w$  such
 that $w|_{\Omega^-}\in W^{k,p}(\Omega^-)$ and $w|_{\Omega^+}\in W^{k,p}(\Omega^+)$.  For the function
 space $W^{k,p}(\Omega^-\cup\Omega^+)$, define norm  as
 \begin{equation*}
\|w\|_{k,p, \Omega^-\cup\Omega^+} = \left( \|w\|_{k,p, \Omega^-}^p + \|w\|_{k,p, \Omega^+}\right)^{1/p},
\end{equation*}
and seminorm as
 \begin{equation*}
|w|_{k,p, \Omega^-\cup\Omega^+} = \left( |w|_{k,p, \Omega^-}^p + |w|_{k,p, \Omega^+}\right)^{1/p}.
\end{equation*}
%

  We  assume that $\mathcal{T}_h$ is a shape regular triangulation of $\Omega$
with $h =\max\limits_{T\in \mathcal{T}} \mbox{diam}(T)_h$, and that $h$ is small enough 
so that the interface $\Gamma$ never crosses any edge of $\mathcal{T}_h$ more than once.
The elements of $\mathcal{T}_h$ can be divided into  two categories: regular elements and interface elements.
We call an element $T$ interface element if  the interface $\Gamma$ passes the interior of $T$; otherwise
we call it regular element. If $\Gamma$ only passes two vertices of an element $T$, we treat
the element $T$ as a regular element. Let $\mathcal{T}^{i}_h$ and $\mathcal{T}^{r}_h$ denote
the set of all interface elements and regular elements, respectively.

Let $\mathcal{N}_h$ and $\mathring{\mathcal{E}}_h$ denote the set of all vertices and interior edges of $\mathcal{T}_h$, respectively.
  We can divide   $\mathring{\mathcal{E}}_ h$ into two categories: interface edge $\mathring{\mathcal{E}}_h^{i}$ and regular edge
$\mathring{\mathcal{E}}_h^{r}$, which are  defined by
\begin{equation}
\mathring{\mathcal{E}}_h^{i} = \{e \in \mathring{\mathcal{E}}_h:  \mathring{e}\cap \Gamma \neq \emptyset\},
\,, \mathring{\mathcal{E}}_h^{r} = \mathring{\mathcal{E}}_h \setminus \mathring{\mathcal{E}}_h^{i}.
\end{equation}
For any interior edge $e$,  there exist two triangles $T_{e,1}$ and $T_{e,2}$ such that $T_{e,1}\cap T_{e,2}= e$.
Denote $n_e$ as the unit normal of $e$ pointing from $T_1$ to $T_2$, and define
\begin{align}
 &\left\{ u\right\} = \frac{1}{2}\left(  u|_{T_{e,1}} + u|_{T_{e,2}}\right),\label{eq:edgeaverage}\\
 &[u] = u|_{T_{e,1}} - u|_{T_{e,2}}. \label{eq:edgejump}
\end{align}
 When no confusion arises the subscript $e$  can be dropped.
We also introduce two  special function spaces $X_h$ and $X_{h,0}$ as
\begin{align}
&X_h := \left\{v\in X_h:  v|_{T} \in H^1(T) \text{ ,  }  v \text{ is continuous at  } \mathcal{N}_h \text{ and  across } \mathring{\mathcal{E}}_h^{r} \right\},\\
&X_{h,0} = \left\{v\in X_h:   v(z) = 0  \text{ for all } z \in \mathcal{N}_h\cap \partial\Omega \right\}.
\end{align}
We define a bilinear form $a_h: X_{h,0}\times X_{h,0} \rightarrow \mathbb{R}$ as
\begin{equation}\label{eq:bilinear}
\begin{split}
 a_h(v, w) = &\sum_{T\in \mathcal{T}_h}\int_{T}\beta\nabla v\cdot \nabla w dx - \sum_{e\in\mathring{\mathcal{E}}_h^{i} }
 \int_e\left\{ \beta\nabla v\cdot n_e\right\}[w]ds +\\
&\epsilon \sum_{e\in\mathring{\mathcal{E}}_h^{i} }
 \int_e\left\{ \beta\nabla w\cdot n_e\right\}[v]ds
 + \sum_{e\in\mathring{\mathcal{E}}_h^{i} }
 \int_e\frac{\sigma_e^0}{|e|}[v][w]ds,
\end{split}
\end{equation}
where the parameter $\sigma_e^0$ is positive and the parameter $\epsilon$ can be arbitrary.  Usually, $\epsilon$ takes the
value $-1$, $0$, or $1$. It is easy to see that $a_h$ is symmetric if $\epsilon = -1$ and it is nonsymmetric otherwise.

The general variational form \cite{LinLinZhang2015} of  \eqref{eq:model}-- \eqref{eq:fluxjump} is to find $u_h \in X_{h,0}$ such that
\begin{equation}\label{eq:var}
a_h(u, v) = (f, v), \quad \forall v \in X_{h,0}.
\end{equation}

\subsection{Partially penalized immersed finite element method}


\begin{figure}[ht]
    \centering
    \includegraphics[width=0.5\textwidth]{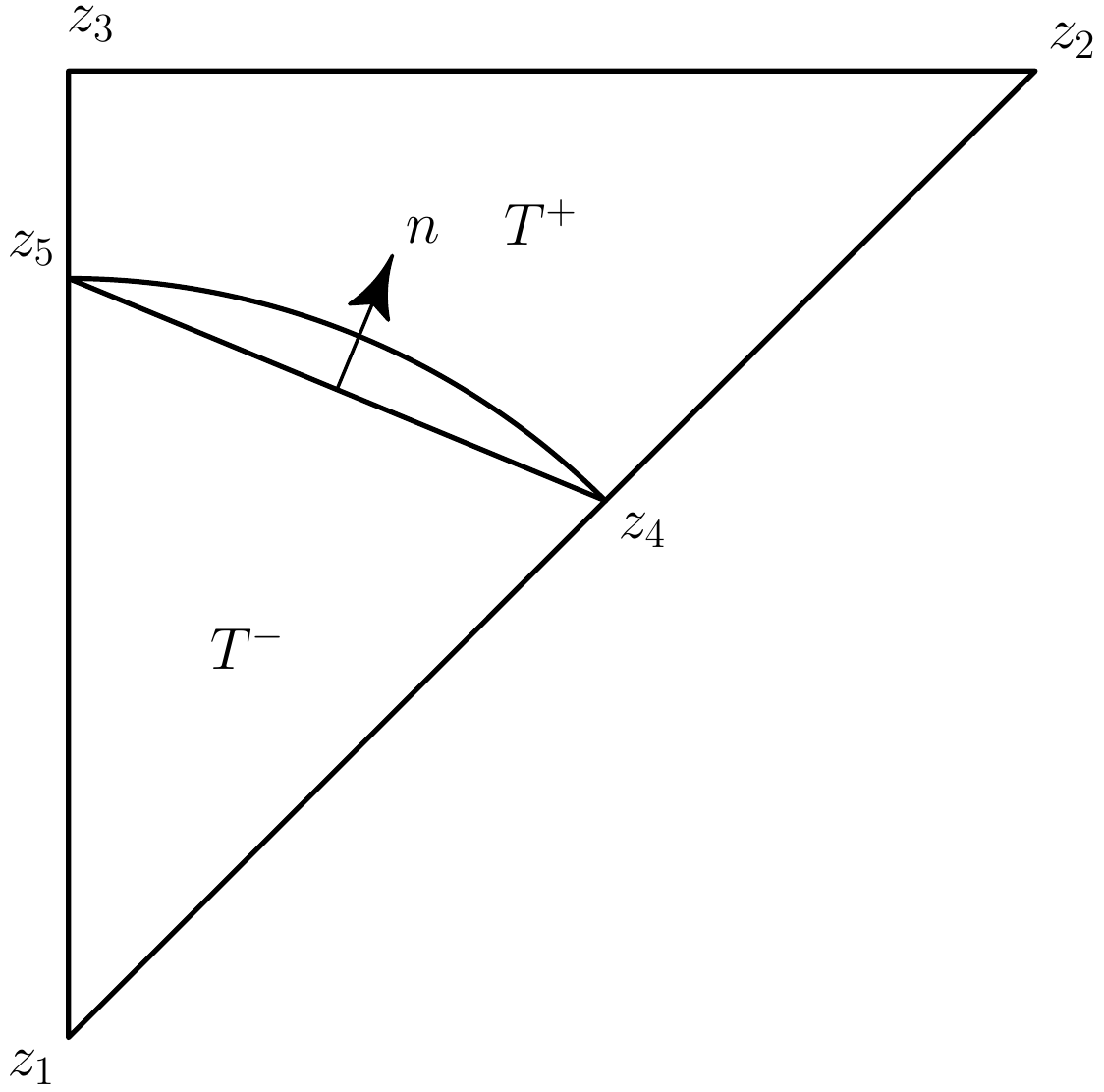}
    \caption{ Typical example of interface element.}
\label{fig:intelem}
\end{figure}

The key idea of partially penalized immersed finite element (PPIFE) method \cite{LinLinZhang2015} is to
modify  basis functions in interface elements to satisfy jump conditions \eqref{eq:valuejump} and \eqref{eq:fluxjump}.
Consider a typical interface element $T$ as in Figure \ref{fig:intelem}, and
let $z_4$ and $z_5$ be the intersection points between the interface $\Gamma$ and edges of the element.   Connecting the line segment
$\overline{z_4z_5}$  forms an approximation of interface $\Gamma$ in the element $T$, denoted by $\Gamma_h|_{T}$.
Then the element $T$ is split into two parts:  $T^-$ and $T^+$. We construct the following
 piecewise linear function on the interface element $T$
\begin{equation}\label{eq:ifembasis}
\phi(z)=
\left\{
\begin{array}{ccc}
\phi^+  = a^++b^+x+c^+y, & z = (x, y)\in T^+,    \\
 \phi^-= a^-+b^-x+c^-y, & z = (x, y)\in T^-,      \\
\end{array}
\right.
\end{equation}
where the coefficients are determined by the following linear system
\begin{align}
 &\phi(z_1) = V_1  , \, \phi(z_2) = V_2,\, \phi(z_3) = V_3,\label{eq:valueeq}\\
& \phi^+(z_4) = \phi^-(z_4), \, \phi^+(z_5) = \phi^-(z_5), \,
\beta^+\partial_n\phi^+ = \beta^-\partial_n\phi^-, \label{eq:fluxeq}
\end{align}
with $V_i$ being the nodal variables.  The immersed finite element space $V_h$  \cite{LiLinWu2003} is defined as
\begin{align}
&V_h := \left\{v\in V_h:  v|_{T} \in V_h(T) \text{ and }  v \text{ is continuous on } \mathcal{N}_h, \right\},\\
&V_{h,0} = \left\{v\in V_h:   v(z) = 0  \text{ for all } z \in \mathcal{N}_h\cap \partial\Omega \right\},
\end{align}
where
\begin{equation}
V_h(T) :=
\left\{
\begin{array}{ll}
 \left\{v| v \in \mathbb{P}_1(T) \right\},&   \text{if } T \in \mathcal{T}^{r}_h;    \\
  \left\{v| v \text{ is defined by } \eqref{eq:ifembasis}-\eqref{eq:fluxeq}\right\},&   \text{if } T \in \mathcal{T}^{i}_h .   \\
\end{array}
\right.
\end{equation}

A function in $V_h(T)$ is called a linear IFE function on $T$ when $T$ is an interface element.  For the linear IFE function, traditional
trace inequality \cite{BrennerScott2008, Ciarlet2002} fails.  In \cite{LinLinZhang2015}, Lin et al. established the following trace inequality:

\begin{lemma}
 There exists a constant $C$ independent of the interface location such that for every linear IFE function $v$ on $T$, the following
 inequality holds:
 \begin{equation}\label{eq:trace}
\|\beta \nabla \cdot n_e\|_{0, e} \le C h^{1/2}|T|^{-1/2}\|\sqrt{\beta}\nabla v\|_{0, K}.
\end{equation}

\end{lemma}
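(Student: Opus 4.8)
The plan is to exploit the fact that a linear IFE function has a piecewise-\emph{constant} gradient, so that the left-hand side of \eqref{eq:trace} is merely a weighted sum of sub-edge lengths, and to tame the possibly degenerate geometry (a thin sliver $T^{\pm}$) through the jump conditions \eqref{eq:fluxeq}, which rigidly couple $\nabla v^{+}$ and $\nabla v^{-}$. Here $K=T$, and the left-hand side is read as $\|\beta\,\nabla v\cdot n_e\|_{0,e}$.

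First I would record the gradient relation forced by \eqref{eq:fluxeq}. Since $\phi^{+}=\phi^{-}$ along the segment $\overline{z_4z_5}$, the tangential derivatives agree, $\nabla v^{+}\cdot\tau=\nabla v^{-}\cdot\tau$, while the flux condition gives $\beta^{+}\,\nabla v^{+}\cdot n=\beta^{-}\,\nabla v^{-}\cdot n$, where $n,\tau$ denote the unit normal and tangent of $\Gamma_h|_{T}$. Decomposing each constant vector into its normal and tangential components then yields
\begin{equation*}
\nabla v^{+}=(\nabla v^{-}\cdot\tau)\,\tau+\frac{\beta^{-}}{\beta^{+}}(\nabla v^{-}\cdot n)\,n,
\end{equation*}
and symmetrically with the roles of $+$ and $-$ interchanged. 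Consequently $\beta^{+}|\nabla v^{+}|^2$ and $\beta^{-}|\nabla v^{-}|^2$ are comparable, with constants depending only on the ratio $\beta^{+}/\beta^{-}$ and \emph{not} on the interface location.

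Next I would estimate the two sides directly. Because $\nabla v^{\pm}$ is constant on $T^{\pm}$, the energy seminorm is
\begin{equation*}
\|\sqrt{\beta}\,\nabla v\|_{0,T}^2=\beta^{+}|\nabla v^{+}|^2\,|T^{+}|+\beta^{-}|\nabla v^{-}|^2\,|T^{-}|,
\end{equation*}
while the trace integrand $\beta\,\nabla v\cdot n_e$ is piecewise constant on $e$, so that, bounding each sub-edge length by $|e|\le h$,
\begin{equation*}
\|\beta\,\nabla v\cdot n_e\|_{0,e}^2\le h\left((\beta^{+})^2|\nabla v^{+}|^2+(\beta^{-})^2|\nabla v^{-}|^2\right).
\end{equation*}
Since $|T^{+}|+|T^{-}|=|T|$, at least one subelement, say $T^{-}$, has area at least $|T|/2$, giving the lower bound $\|\sqrt{\beta}\,\nabla v\|_{0,T}^2\ge\tfrac12\beta^{-}|\nabla v^{-}|^2\,|T|$. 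Combining this with the gradient comparability from the previous step to replace the $+$ contribution in the trace bound by a multiple of $\beta^{-}|\nabla v^{-}|^2$, one reaches
\begin{equation*}
\|\beta\,\nabla v\cdot n_e\|_{0,e}^2\le C\,h\,\beta^{-}|\nabla v^{-}|^2\le C\,\frac{h}{|T|}\,\|\sqrt{\beta}\,\nabla v\|_{0,T}^2,
\end{equation*}
which is \eqref{eq:trace} after taking square roots.

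The main obstacle is precisely the uniformity claim: when $\Gamma$ cuts off a thin sliver, a naive term-by-term split of the edge integral fails because the ratio $|e\cap T^{\pm}|/|T^{\pm}|$ blows up. The essential point is that one must never estimate the flux on the sliver side by that side's own (tiny) area; instead the rigidity of the IFE construction transfers all control to the fat subelement, whose area is bounded below by $|T|/2$ irrespective of where $\Gamma$ meets $T$. Verifying that the comparability constants depend only on $\beta^{\pm}$ (through $\max(\beta^{+}/\beta^{-},\beta^{-}/\beta^{+})$) and not on the cut is the crux that delivers the interface-location-independent constant $C$.
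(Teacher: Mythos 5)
Your argument is correct: the paper itself offers no proof of this lemma (it is quoted directly from [LinLinZhang2015]), and your proof follows essentially the same route as the one in that reference --- using the jump conditions to show $\beta^{+}|\nabla v^{+}|^{2}$ and $\beta^{-}|\nabla v^{-}|^{2}$ are comparable with constants independent of the cut, and then bounding the energy from below via the subelement of area at least $|T|/2$. You also correctly read the statement's typos ($\|\beta\nabla v\cdot n_e\|_{0,e}$ on the left and $K=T$ on the right), so nothing further is needed.
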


It is obvious that   $V_h$ (resp.  $V_{h,0}$) is a subspace of $X_h$ (resp.  $X_{h,0}$).   
The PPIFE
method for \eqref{eq:model}-- \eqref{eq:fluxjump} reads as finding $u_h \in V_{h,0}$ such that
\begin{equation}\label{eq:ppifem}
a_h(u_h, v_h) = (f, v_h), \quad \forall v_h \in V_{h,0}.
\end{equation}

 The energy norm $\|\cdot\|_h$ is defined as
\begin{equation}
\|v_h\|_h = \left( \sum_{T\in \mathcal{T}_h} \int_{T} \beta \nabla v_h \cdot \nabla v_h  dx + \sum_{e\in\mathring{\mathcal{E}}_h^{i} }
 \int_e\frac{\sigma_e^0}{|e|}[v_h]^2ds\right)^{1/2}.
\end{equation}
The following coercivity has been proved in \cite{LinLinZhang2015}:
\begin{lemma}\label{lem:coercivity}
There exists a constant $C>0$ such that
\begin{equation}
C\|v_h\|_h^2 \le a_h(v_h, v_h), \quad \forall v_h \in V_{h,0},
\end{equation}
is true for $\epsilon = 1$ unconditionally and is true for $\epsilon = 0$ or $\epsilon = -1$ under the condition that $\sigma^0_e$ is large enough.
\end{lemma}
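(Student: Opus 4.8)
The plan is to test the bilinear form on the diagonal, $v=w=v_h$, and read off the structure directly. Substituting into \eqref{eq:bilinear} one sees that the consistency integral $-\int_e\{\beta\nabla v_h\cdot n_e\}[v_h]\,ds$ and the symmetrization integral $+\epsilon\int_e\{\beta\nabla v_h\cdot n_e\}[v_h]\,ds$ coincide, so
\begin{equation*}
a_h(v_h,v_h)=\sum_{T\in\mathcal{T}_h}\int_T\beta|\nabla v_h|^2\,dx+(\epsilon-1)\sum_{e\in\mathring{\mathcal{E}}_h^{i}}\int_e\{\beta\nabla v_h\cdot n_e\}[v_h]\,ds+\sum_{e\in\mathring{\mathcal{E}}_h^{i}}\int_e\frac{\sigma_e^0}{|e|}[v_h]^2\,ds.
\end{equation*}
When $\epsilon=1$ the middle sum vanishes and the right-hand side is exactly $\|v_h\|_h^2$, so coercivity holds unconditionally with $C=1$. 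This isolates the only real work to the case $\epsilon\in\{0,-1\}$, where $\epsilon-1$ is negative and the cross term carries the wrong sign.

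For $\epsilon\in\{0,-1\}$ I would first bound the cross term edge by edge. By Cauchy--Schwarz, $|\int_e\{\beta\nabla v_h\cdot n_e\}[v_h]\,ds|\le\|\{\beta\nabla v_h\cdot n_e\}\|_{0,e}\|[v_h]\|_{0,e}$; splitting the average over the two triangles $T_{e,1},T_{e,2}$ sharing $e$ and applying the IFE trace inequality \eqref{eq:trace} to each interface neighbor (and the standard trace inequality on any regular neighbor) gives $\|\{\beta\nabla v_h\cdot n_e\}\|_{0,e}^2\le C\,h\sum_{j=1,2}|T_{e,j}|^{-1}\|\sqrt{\beta}\nabla v_h\|_{0,T_{e,j}}^2$. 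The crucial bookkeeping is the mesh-size cancellation: reading the $h$ in \eqref{eq:trace} as the local element diameter, shape regularity gives $|e|\lesssim h$ and $|T_{e,j}|\gtrsim h^2$, so the factor $\tfrac{|e|}{\sigma_e^0}\,h\,|T_{e,j}|^{-1}$ reduces to $\tfrac{C'}{\sigma_e^0}$ with $C'$ independent of $h$.

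Next I would absorb. Using a weighted Young inequality tuned to the penalty norm,
\begin{equation*}
\|\{\beta\nabla v_h\cdot n_e\}\|_{0,e}\|[v_h]\|_{0,e}\le\frac{1}{2\gamma}\frac{|e|}{\sigma_e^0}\|\{\beta\nabla v_h\cdot n_e\}\|_{0,e}^2+\frac{\gamma}{2}\frac{\sigma_e^0}{|e|}\|[v_h]\|_{0,e}^2,
\end{equation*}
and summing over $\mathring{\mathcal{E}}_h^{i}$ (each triangle meets at most three edges, so multiplicities are bounded), the first group is controlled by $\tfrac{C_0}{\gamma\sigma}\sum_T\int_T\beta|\nabla v_h|^2\,dx$ with $\sigma=\min_e\sigma_e^0$, and the second by $\tfrac{\gamma}{2}$ times the penalty part of $\|v_h\|_h^2$. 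Writing $m=|\epsilon-1|\in\{1,2\}$, this yields
\begin{equation*}
a_h(v_h,v_h)\ge\Bigl(1-\frac{mC_0}{\gamma\sigma}\Bigr)\sum_T\int_T\beta|\nabla v_h|^2\,dx+\Bigl(1-\frac{m\gamma}{2}\Bigr)\sum_{e\in\mathring{\mathcal{E}}_h^{i}}\int_e\frac{\sigma_e^0}{|e|}[v_h]^2\,ds.
\end{equation*}
Choosing $\gamma<2/m$ makes the jump coefficient positive, and then requiring $\sigma$ large enough (e.g. $\sigma>m^2C_0$ with $\gamma=1/m$) makes the gradient coefficient positive; the smaller of the two coefficients serves as $C$.

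This is the classical interior-penalty coercivity argument, so the only genuinely nonstandard ingredient---and the main obstacle---is the availability of the trace inequality \eqref{eq:trace} for IFE functions, since the usual trace and inverse estimates fail on interface elements where $\nabla v_h$ is only piecewise constant. Once \eqref{eq:trace} is in hand, the remaining difficulty is purely constant bookkeeping: checking that the $h$-powers cancel so the penalty weight $\sigma_e^0/|e|$ exactly balances the $h^{1/2}|T|^{-1/2}$ from the trace estimate, and confirming that summing over shared edges inflates constants only by a shape-regularity factor. I expect no obstruction in the $\epsilon=1$ case and a transparent largeness threshold on $\sigma_e^0$ in the remaining two.
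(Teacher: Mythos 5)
Your proof is correct and is essentially the argument this paper relies on: the paper does not prove Lemma \ref{lem:coercivity} itself but cites the original PPIFE paper of Lin, Lin, and Zhang, where coercivity is established by exactly this route --- diagonal substitution so that the two consistency terms combine into $(\epsilon-1)$ times a single cross term (vanishing for $\epsilon=1$), the IFE trace inequality \eqref{eq:trace} to control $\|\{\beta\nabla v_h\cdot n_e\}\|_{0,e}$ on interface elements, and a weighted Young inequality to absorb the cross term into the gradient and penalty parts for $\sigma_e^0$ large. Your constant bookkeeping (the cancellation $|e|\,h\,|T|^{-1}\lesssim 1$ under shape regularity and the bounded edge multiplicity) is the right and complete accounting, so I see no gap.
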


Based on the above coercivity,  Lin  et al. proved the following optimal convergence result:
\begin{theorem}
 Assume that the exact solution $u$ to the interface problem \eqref{eq:model}-- \eqref{eq:fluxjump}   is in $H^3(\Omega^-\cup\Omega^+)$
 and $u_h$ is the solution to \eqref{eq:ppifem} on a Cartesian mesh $\mathcal{T}_h$. Then there exists a constant $C$ such that
 \begin{equation}
 \|u-u_h\|_h \le Ch \|u\|_{3, \Omega^-\cup\Omega^+}.
\end{equation}
\end{theorem}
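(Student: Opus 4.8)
The plan is to follow the standard Strang/Céa-type argument for interior-penalty schemes, adapted to the immersed setting, using the three ingredients already available: the consistency identity \eqref{eq:var}, the coercivity of Lemma \ref{lem:coercivity}, and the IFE trace inequality \eqref{eq:trace}. Since $u\in H^3(\Omega^-\cup\Omega^+)$ is globally continuous and vanishes on $\partial\Omega$, we have $u\in X_{h,0}$ and $[u]=0$ on every interior edge, so \eqref{eq:var} records the consistency of the exact solution. Subtracting \eqref{eq:ppifem} from \eqref{eq:var} and using $V_{h,0}\subset X_{h,0}$ gives the Galerkin orthogonality
\begin{equation*}
a_h(u-u_h, v_h) = 0, \qquad \forall v_h \in V_{h,0}.
\end{equation*}

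First I would introduce an augmented norm that controls the non-symmetric flux terms of $a_h$,
\begin{equation*}
\|v\|_{h,*}^2 = \|v\|_h^2 + \sum_{e\in\mathring{\mathcal{E}}_h^{i}} \frac{|e|}{\sigma_e^0}\|\{\beta\nabla v\cdot n_e\}\|_{0,e}^2,
\end{equation*}
and prove a boundedness estimate $|a_h(v,w)|\le C\|v\|_{h,*}\|w\|_{h,*}$ for all $v,w\in X_{h,0}$. This is routine: the volume term is handled by Cauchy--Schwarz in the weighted gradient norm, while each edge term is split as $\int_e\{\beta\nabla v\cdot n_e\}[w]\,ds\le\left(\frac{|e|}{\sigma_e^0}\|\{\beta\nabla v\cdot n_e\}\|_{0,e}^2\right)^{1/2}\left(\frac{\sigma_e^0}{|e|}\|[w]\|_{0,e}^2\right)^{1/2}$, so that the flux factor is absorbed by $\|v\|_{h,*}$ and the jump factor by the penalty part of $\|w\|_h$. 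The crucial observation is that on $V_{h,0}$ the two norms are equivalent: applying \eqref{eq:trace} on each of the two elements sharing an interface edge $e$ bounds $\|\{\beta\nabla w\cdot n_e\}\|_{0,e}^2$ by $Ch|T|^{-1}\|\sqrt{\beta}\nabla w\|_{0,T}^2$, and since $|e|\sim h$ and $|T|\sim h^2$ on a Cartesian mesh the weight $|e|/\sigma_e^0$ renders $\frac{|e|}{\sigma_e^0}\|\{\beta\nabla w\cdot n_e\}\|_{0,e}^2$ of the same order as $\|\sqrt{\beta}\nabla w\|_{0,T}^2$; summing over the interface edges yields $\|w\|_{h,*}\le C\|w\|_h$ for every $w\in V_{h,0}$.

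With these in hand I would set $u_I:=I_h u$, the IFE interpolant of $u$, and run the coercivity argument. By Lemma \ref{lem:coercivity} and the Galerkin orthogonality,
\begin{equation*}
\|u_h-u_I\|_h^2 \le C\,a_h(u_h-u_I, u_h-u_I) = C\,a_h(u-u_I, u_h-u_I) \le C\|u-u_I\|_{h,*}\|u_h-u_I\|_{h,*},
\end{equation*}
and the norm equivalence on $V_{h,0}$ converts the last factor into $\|u_h-u_I\|_h$, giving $\|u_h-u_I\|_h\le C\|u-u_I\|_{h,*}$. A triangle inequality then reduces the whole theorem to an interpolation bound:
\begin{equation*}
\|u-u_h\|_h \le \|u-u_I\|_h + \|u_I-u_h\|_h \le \|u-u_I\|_h + C\|u-u_I\|_{h,*}.
\end{equation*}

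It remains to establish $\|u-u_I\|_{h,*}\le Ch\|u\|_{3,\Omega^-\cup\Omega^+}$. On regular elements this is the classical linear interpolation estimate; on interface elements I would invoke the IFE approximation theory of \cite{LiLinLin2004}, which delivers $\mathcal{O}(h)$ accuracy in the weighted gradient norm. The penalty contribution $\sum_{e}\frac{\sigma_e^0}{|e|}\|[u-u_I]\|_{0,e}^2$ uses $[u]=0$ together with the fact that the two one-sided interpolants agree at the shared vertices, so the jump equals a difference of interpolation-error traces and is $\mathcal{O}(h^{3/2})$ in $L^2(e)$; the flux contribution in $\|\cdot\|_{h,*}$ is controlled by an ordinary trace inequality on the smooth part of the error and by \eqref{eq:trace} on the IFE part. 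Collecting the volume, penalty, and flux estimates and summing over the $\mathcal{O}(h^{-1})$ interface edges produces the factor $h$. The main obstacle is precisely this last step on interface elements: the IFE interpolant is not an ordinary polynomial, the classical trace inequality fails for it, and one must marry the approximation properties of the IFE space with the scaled bound \eqref{eq:trace} to keep every boundary term at optimal order; once this bookkeeping is complete, substitution back through the triangle inequality yields $\|u-u_h\|_h\le Ch\|u\|_{3,\Omega^-\cup\Omega^+}$.
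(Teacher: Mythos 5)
This theorem is not proved in the paper at all: it is quoted verbatim from \cite{LinLinZhang2015} (``Lin et al.\ proved the following optimal convergence result''), so there is no in-paper proof to compare against. Your strategy --- Galerkin orthogonality from \eqref{eq:var}, coercivity from Lemma \ref{lem:coercivity}, an augmented norm $\|\cdot\|_{h,*}$ with boundedness of $a_h$, the equivalence $\|w\|_{h,*}\le C\|w\|_h$ on $V_{h,0}$ via the IFE trace inequality \eqref{eq:trace}, and reduction to $\|u-u_I\|_{h,*}\le Ch\|u\|_{3,\Omega^-\cup\Omega^+}$ --- is exactly the architecture of the proof in \cite{LinLinZhang2015}, and every step you actually carry out (the continuity estimate, the norm equivalence using $|e|\sim h$, $|T|\sim h^2$, the coercivity/duality chain, the volume and penalty contributions to the interpolation error) is correct.

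The gap is that you have deferred, rather than proved, the one estimate that carries all the difficulty: the bound
\begin{equation*}
\sum_{e\in\mathring{\mathcal{E}}_h^{i}}\frac{|e|}{\sigma_e^0}\bigl\|\{\beta\nabla(u-u_I)\cdot n_e\}\bigr\|_{0,e}^2 \le Ch^{2}\|u\|_{3,\Omega^-\cup\Omega^+}^2,
\end{equation*}
which is precisely inequality (4.19) of \cite{LinLinZhang2015} and is the same estimate this paper later has to import wholesale in its proof of Theorem \ref{thm:supercloseness}. Your sketch --- ``an ordinary trace inequality on the smooth part of the error and \eqref{eq:trace} on the IFE part'' --- does not go through as stated: $\nabla(u-u_I)$ is not in $H^1(T)$ on an interface element (the gradient of $u_I$ jumps across $\Gamma_h|_T$ while that of $u$ jumps across $\Gamma$, and these two curves do not coincide), the sub-elements $T^{\pm}$ can be arbitrarily thin so a piecewise application of the standard trace inequality loses the required scaling, and \eqref{eq:trace} applies to IFE functions, not to the error $u-u_I$. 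Closing this requires the specific structure of the IFE interpolation operator together with the $H^3$ regularity of $u$ on each subdomain, and it is the substantive content of the theorem; everything else in your argument is routine once this lemma is granted. As written, the proposal is a correct reduction of the theorem to (4.19) of \cite{LinLinZhang2015}, not a proof of it.
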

\vspace{-0.15in}
\begin{remark}
 As remarked in \cite{LinLinZhang2015},  when the exact solution belongs to $W^{2, \infty}(\Omega^-\cup\Omega^+)$, the IFE solution $u_h$
 of \eqref{eq:ppifem}  on a Cartesian mesh $\mathcal{T}_h$ has error estimation in the following form
 \begin{equation}
 \|u-u_h\|_h \le C\left( h\|u\|_{3, \Omega^-\cup\Omega^+} + h^{3/2} \|u\|_{3, \infty, \Omega^-\cup\Omega^+}\right).
\end{equation}
Note that the above error estimation is also an optimal one since the leading (first) term is of $\mathcal{O}(h)$.
\end{remark}

\section{Superconvergence Analysis} In this section, we first present a superconvergence analysis 
for the PPIFE method on shape regular meshes.
Then we show  that the gradient recovery method introduced in \cite{GuoYang2016b} is applicable 
and prove that the recovered gradient is superconvergent to the exact gradient.
\subsection{Supercloseness result}  From now on,  we suppose $\mathcal{T}_h$ is a shape regular triangular mesh although $\mathcal{T}_h$ is usually
Cartesian mesh in the literature of IFE methods.  Let $h = \max\limits_{T\in \mathcal{T}_h} \text{diam}(T)$.  The set of regular element  $\mathcal{T}_h^r$ can be further decomposed into the
following two disjoint parts:
 \begin{equation}\label{eq:meshdecom}
\begin{split}
 &\mathcal{T}^-_h:=\left\{  T\in \mathcal{T}_h^r|  T \text{ has all three vertices in } \overline{\Omega^-}     \right\},\\
 & \mathcal{T}^+_h:=\left\{  T\in \mathcal{T}_h^r|  T  \text{ has all three vertices in }\overline{\Omega^+}  \right\} .\\
\end{split}
\end{equation}

\begin{definition}
1. Two adjacent triangles are called to  form an $\mathcal{O}(h^{1+\alpha})$  approximate parallelogram if the lengths of any two opposite edges differ only by $\mathcal{O}(h^{1+\alpha})$.

2. The triangulation $\mathcal{T}_h$ is called to satisfy Condition
$(\sigma,\alpha)$ if
there exist a partition $\mathcal{T}_{h,1} \cup \mathcal{T}_{h,2}$ of $\mathcal{T}_h$
and positive constants $\alpha$ and $\sigma$ such that every
two adjacent triangles in $\mathcal{T}_{h,1}$ form an $\mathcal{O}(h^{1+\alpha})$ parallelogram and
$$
\sum_{T\in {\mathcal{T}_{h,2}}} |T| = \mathcal{O}(h^\sigma).
$$
\end{definition}
\vspace{-0.15in}
\begin{remark}
 It is obvious that Cartesian mesh satisfies  Condition $(\sigma,\alpha)$ with $\sigma = \infty$ and $\alpha = 1$.
\end{remark}

Suppose $\mathcal{T}_h$ satisfies  Condition $(\sigma,\alpha)$. Then we can prove the following
supercloseness result:
\begin{theorem}\label{thm:supercloseness}
 Suppose the triangulation $\mathcal{T}_h$ satisfies  Condition $(\sigma,\alpha)$.  Let $u$ be the solution of
 the interface problem  \eqref{eq:model}-- \eqref{eq:fluxjump} and $u_I$ be the interpolation of $u$ in the IFE space
 $V_{h,0}$. If $u \in H^1(\Omega) \cap H^{3}(\Omega^-\cup\Omega^+)\cap W^{2, \infty}(\Omega^-\cup\Omega^+)$,
 then for all $v_h \in V_{h,0}$
 \begin{equation}\label{eq:superclose}
a_h(u-u_I, v_h) \le C\left(h^{1+\rho}(\|u\|_{3, \Omega^+\cup\Omega^-} + \|u\|_{2, \infty, \Omega^+\cup\Omega^-}) + Ch^{3/2} \|u\|_{2, \infty, \Omega^+\cup\Omega^-}\right)|v_h|_{h}.
\end{equation}
where $C$ is a constant independent of interface location and $h$ and $\rho = \min(\alpha, \frac{\sigma}{2}, \frac{1}{2})$.
\end{theorem}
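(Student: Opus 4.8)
The plan is to exploit the three structural observations advertised in the introduction by splitting the bilinear form $a_h(u-u_I,v_h)$ according to the element decomposition $\mathcal{T}_h = \mathcal{T}_h^- \cup \mathcal{T}_h^+ \cup \mathcal{T}_h^i$ and handling the volume terms and the edge (penalty and consistency) terms separately. Writing $\eta = u - u_I$, the bilinear form expands into a volume contribution $\sum_T \int_T \beta \nabla \eta \cdot \nabla v_h\,dx$ and the three edge sums over $\mathring{\mathcal{E}}_h^i$. The key point is that on the regular subsets $\mathcal{T}_h^\pm$ the IFE interpolant coincides with the standard continuous piecewise-linear interpolant and $u$ is smooth there, so the classical supercloseness machinery applies verbatim; the interface contributions are small simply because there are few of them.

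\medskip

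First I would treat the volume term over the regular elements. On each $T \in \mathcal{T}_h^\pm$, $u$ restricted to the relevant subdomain lies in $H^3$ and $V_h(T) = \mathbb{P}_1(T)$, so $u_I$ is the usual Lagrange interpolant and $\nabla v_h$ is constant on $T$. Invoking Condition $(\sigma,\alpha)$, I would pair adjacent elements that form an $\mathcal{O}(h^{1+\alpha})$ approximate parallelogram and apply the integral identity / orthogonality estimate of \cite{BankXu2003, ChenXu2007, XuZhang2004} to obtain a bound of order $h^{1+\alpha}\|u\|_{3}$ on $\mathcal{T}_{h,1}$, while the exceptional set $\mathcal{T}_{h,2}$ contributes at most $\mathcal{O}(h^{\sigma/2})$ after a Cauchy--Schwarz estimate using $\sum_{T\in\mathcal{T}_{h,2}}|T| = \mathcal{O}(h^\sigma)$ together with the $W^{2,\infty}$ bound on $u$. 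Combining these gives the $h^{1+\rho}(\|u\|_3 + \|u\|_{2,\infty})$ part of the estimate with $\rho \le \min(\alpha, \sigma/2)$.

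\medskip

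Next I would handle the interface elements and the three edge sums, which together produce the $h^{3/2}\|u\|_{2,\infty}$ term. The essential inputs are observation (3), namely $\#\mathcal{T}_h^i = \mathcal{O}(h^{-1})$ (and likewise $\#\mathring{\mathcal{E}}_h^i = \mathcal{O}(h^{-1})$), and the trace inequality \eqref{eq:trace} for IFE functions. For the volume term on an interface element I would use the local interpolation error $\|\nabla\eta\|_{0,T} \lesssim h\|u\|_{2,\infty,T}$, so that summing over $\mathcal{O}(h^{-1})$ interface elements, each of area $\mathcal{O}(h^2)$, yields a factor $h^{3/2}$ after Cauchy--Schwarz against $\|\sqrt{\beta}\nabla v_h\|_0$. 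For the consistency edge terms $\int_e \{\beta\nabla\eta\cdot n_e\}[v_h]\,ds$ and $\epsilon\int_e\{\beta\nabla v_h\cdot n_e\}[\eta]\,ds$, I would apply the trace inequality \eqref{eq:trace} to the IFE factor and standard trace/interpolation bounds to the smooth factor, again summing over $\mathcal{O}(h^{-1})$ interface edges; the penalty term $\int_e \frac{\sigma_e^0}{|e|}[\eta][v_h]\,ds$ is controlled by the $|v_h|_h$ seminorm directly, using that $[\eta]$ is small since $u$ is continuous across $\Gamma$ and the interpolation reproduces this up to $\mathcal{O}(h^2)$ pointwise. Collecting the interface contributions and bounding the $v_h$-dependent factors by $|v_h|_h$ delivers the $h^{3/2}\|u\|_{2,\infty}$ term.

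\medskip

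The main obstacle I anticipate is the sharp bookkeeping on the interface edges: the IFE basis functions are only piecewise linear across $\Gamma_h|_T$, so the ordinary trace and inverse estimates do not apply to $v_h$ on interface elements, and one must route every estimate through the special trace inequality \eqref{eq:trace} while being careful that the constant there is genuinely independent of the interface location within the element. Keeping track of which factor carries the $h^{1/2}|T|^{-1/2}$ weight from \eqref{eq:trace} versus which carries the smooth interpolation error, and ensuring the $|T|^{-1/2}$ cancels correctly against the $\mathcal{O}(h^2)$ element measure so that summing $\mathcal{O}(h^{-1})$ terms yields exactly $h^{3/2}$ rather than a worse power, is the delicate part. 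Everything else reduces to the established smooth-case supercloseness theory and routine Cauchy--Schwarz.
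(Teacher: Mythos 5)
Your proposal follows essentially the same route as the paper's proof: the same decomposition into volume terms over $\mathcal{T}_h^{\pm}$ and $\mathcal{T}_h^i$ plus the three interface-edge sums, the classical supercloseness results of \cite{BankXu2003,ChenXu2007,XuZhang2004} on the regular parts, and the combination of the IFE trace inequality \eqref{eq:trace}, optimal IFE interpolation estimates, and the count $\#\mathcal{T}_h^i=\mathcal{O}(h^{-1})$ to extract the $h^{3/2}\|u\|_{2,\infty}$ contribution from the interface elements and edges. The bookkeeping you flag as delicate is handled in the paper exactly as you anticipate, so no substantive difference or gap remains.
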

\begin{proof}
 Notice that
 \begin{equation}
\begin{split}
 & a_h(u-u_I, v_h)\\ = &\sum_{T\in \mathcal{T}_h}\int_{T}\beta\nabla (u-u_I)\cdot \nabla v_h dx  -
  \sum_{e\in\mathring{\mathcal{E}}_h^{i} }
 \int_e\left\{ \beta\nabla (u-u_I)\cdot n_e\right\}[v_h]ds +\\
&\epsilon \sum_{e\in\mathring{\mathcal{E}}_h^{i} }
 \int_e\left\{ \beta\nabla v_h\cdot n_e\right\}[u-u_I]ds  +
 \sum_{e\in\mathring{\mathcal{E}}_h^{i} }
 \int_e\frac{\sigma_e^0}{|e|}[u-u_I][v_h]ds \\
=&\sum_{T\in \mathcal{T}_h^+}\int_{T}\beta\nabla (u-u_I)\cdot \nabla v_h dx +
\sum_{T\in \mathcal{T}_h^-}\int_{T}\beta\nabla (u-u_I)\cdot \nabla v_h dx \\
&\sum_{T\in \mathcal{T}_h^i}\int_{T}\beta\nabla (u-u_I)\cdot \nabla v_h dx  -
  \sum_{e\in\mathring{\mathcal{E}}_h^{i} }
 \int_e\left\{ \beta\nabla (u-u_I)\cdot n_e\right\}[v_h]ds +\\
&\epsilon \sum_{e\in\mathring{\mathcal{E}}_h^{i} }
 \int_e\left\{ \beta\nabla v_h\cdot n_e\right\}[u-u_I]ds  +
 \sum_{e\in\mathring{\mathcal{E}}_h^{i} }
 \int_e\frac{\sigma_e^0}{|e|}[u-u_I][v_h]ds\\
 = &I_1 + I_2 + I_3 + I_4 + I_5 + I_6.
\end{split}
\end{equation}
Since $\mathcal{T}_h$ satisfies Condition $(\sigma,\alpha)$, it follows that $\mathcal{T}_h^+$ and $\mathcal{T}_h^-$  also satisfy Condition $(\sigma,\alpha)$ .
Using the fact that the IFE functions becoming standard linear function on regular element, we have the following estimates for $I_1$ and $I_2$, whose proof can be found in
\cite{XuZhang2004}:
\begin{align}
 |I_1|\le Ch^{1+\rho}(\|u\|_{3, \Omega^+} + \|u\|_{2, \infty, \Omega^+})|v_h|_{h},\\
  |I_2|\le Ch^{1+\rho}(\|u\|_{3, \Omega^-} + \|u\|_{2, \infty, \Omega^-})|v_h|_{h},
\end{align}
where $C$ is a constant independent of $h$ and $\rho = \min(\alpha, \frac{\sigma}{2}, \frac{1}{2})$.  Now we proceed to estimate $I_3$.
By the Cauchy-Schwartz inequality, we have
\begin{equation}
\begin{split}
I_3 = &\sum_{T\in \mathcal{T}_h^i}\int_{T}\beta\nabla (u-u_I)\cdot \nabla v_h dx \\
 \le& \left( \sum_{T\in \mathcal{T}_h^i} \|\beta^{1/2}\nabla (u-u_I)\|^2_{0, T}\right)^{1/2}
 \left( \sum_{T\in \mathcal{T}_h^i} \|\beta^{1/2}\nabla v_h\|^2_{0, T}\right)^{1/2}\\
 \le& \left( \sum_{T\in \mathcal{T}_h^i} \max(\beta^-, \beta^+) \|\nabla (u-u_I)\|^2_{0, T}\right)^{1/2}
 \left( \sum_{T\in \mathcal{T}_h^i}\|\beta^{1/2}\nabla v_h\|^2_{0, T}\right)^{1/2}\\
  \le&C\left( \sum_{T\in \mathcal{T}_h^i}  h^2\|u\|_{2, T^-\cup T^+}^2\right)^{1/2}
 \left( \sum_{T\in \mathcal{T}_h^i}\|\beta^{1/2}\nabla v_h\|^2_{0, T}\right)^{1/2}\\
   \le&C\left( \sum_{T\in \mathcal{T}_h^i}  h^4\|u\|_{2, \infty, T^-\cup T^+}^2\right)^{1/2}
 \left( \sum_{T\in \mathcal{T}_h^i}\|\beta^{1/2}\nabla v_h\|^2_{0, T}\right)^{1/2}\\
    \le&Ch^2\|u\|_{2, \infty, \Omega^-\cup \Omega^+}\left( \sum_{T\in \mathcal{T}_h^i} 1  \right)^{1/2}
 \left( \sum_{T\in \mathcal{T}_h^i}\|\beta^{1/2}\nabla v_h\|^2_{0, T}\right)^{1/2}\\
  \le&Ch^{3/2}\|u\|_{2, \infty, \Omega^-\cup \Omega^+}
 \left( \sum_{T\in \mathcal{T}_h^i}\|\beta^{1/2}\nabla v_h\|^2_{0, T}\right)^{1/2}\\
   \le&Ch^{3/2}\|u\|_{2, \infty, \Omega^-\cup \Omega^+}\|v_h\|_h,
\end{split}
\end{equation}
where we have used optimal approximation capability of IFE space \cite{LiLinLin2004, LiLinWu2003} and the fact that $ \sum_{T\in \mathcal{T}_h^i} 1 \approx \mathcal{O}(h^{-1})$.
Then we estimate $I_4$.  Cauchy-Schwartz inequality implies that
\begin{equation}
\begin{split}
 I_4 = &\sum_{e\in\mathring{\mathcal{E}}_h^{i} }\int_e\left\{ \beta\nabla (u-u_I)\cdot n_e\right\}[v_h]ds \\
 \le& \left( \sum_{e\in\mathring{\mathcal{E}}_h^{i} }\int_e \frac{|e|}{\sigma_e^0}\left\{ \beta\nabla (u-u_I)\cdot n_e\right\}^2 ds\right)^{1/2}
   \left( \sum_{e\in\mathring{\mathcal{E}}_h^{i} }\int_e\frac{\sigma_e^0}{|e|} [v_h]^2 ds\right)^{1/2}\\
    \le& Ch^{1/2}\left( \sum_{e\in\mathring{\mathcal{E}}_h^{i} }\int_e \left\{ \beta\nabla (u-u_I)\cdot n_e\right\}^2 ds\right)^{1/2}
   \left( \sum_{e\in\mathring{\mathcal{E}}_h^{i} }\int_e\frac{\sigma_e^0}{|e|} [v_h]^2 ds\right)^{1/2}\\
    \le& Ch^{2}\|u\|_{2, \infty, \Omega^-\cup\Omega^+}\left( \sum_{T\in \mathcal{T}_h^i} 1\right)^{1/2}
   \left( \sum_{e\in\mathring{\mathcal{E}}_h^{i} }\int_e\frac{\sigma_e^0}{|e|} [v_h]^2 ds\right)^{1/2}\\
      \le& Ch^{3/2}\|u\|_{2, \infty, \Omega^-\cup\Omega^+}\|v_h\|_h,\\
\end{split}
\end{equation}
where we have used (4.19) in \cite{LinLinZhang2015}.  To bound $I_5$, we use the standard  trace inequality \cite{BrennerScott2008, Ciarlet2002} which implies
\begin{equation}\label{eq:edgeapprox}
\begin{split}
 \|[u-u_I]\|_{0, e} \le& \| (u-u_I)_{T_{e,1}}\|_{0,e} + \|(u-u_I)_{T_{e,2}}\|_{0,e}\\
 \le & Ch^{-1/2}\left(\|u-u_I\|_{0, T_{e,1}} + h\|\nabla(u-u_I)\|_{0,T_{e,1}}\right) + \\
 &Ch^{-1/2}\left(\|u-u_I\|_{0, T_{e,2}} + h\|\nabla(u-u_I)\|_{0,T_{e,2}}\right) \\
  \le & Ch^{3/2}\left(\|u\|_{2,  T_{e,1}^-\cup  T_{e,1}^+} + \|u\|_{2,  T_{e,2}^-\cup  T_{e,1}^+}\right) \\
    \le & Ch^{5/2}\|u\|_{2,  \infty, \Omega^-\cup  \Omega^+} .
     \end{split}
\end{equation}
Also, the trace inequality for IFE function \eqref{eq:trace} implies that
\begin{equation}
\begin{split}
 \|\left\{ \beta\nabla v_h\cdot n_e\right\}\|_{0, e} \le& \ \|\left\{ \beta\nabla v_h|_{T_{e,1}}\cdot n_e\right\}\|_{0, e}  +  \|\left\{ \beta\nabla v_h|_{T_{e,2}}\cdot n_e\right\}\|_{0, e}\\
\le & Ch^{-1/2} \left( \|\sqrt{\beta} \nabla v_h\|_{0, T_{e,1}} +  \|\sqrt{\beta} \nabla v_h\|_{0, T_{e,2}}\right).
     \end{split}
\end{equation}
Hence, we get
\begin{equation}
\begin{split}
I_5 = &\left|\epsilon \sum_{e\in\mathring{\mathcal{E}}_h^{i} }
 \int_e\left\{ \beta\nabla v_h\cdot n_e\right\}[u-u_I]ds \right|\\
  \le& \left( \sum_{e\in\mathring{\mathcal{E}}_h^{i} }\|\left\{ \beta\nabla v_h\cdot n_e\right\}\|_{0,e}^2 \right)^{1/2}
   \left( \sum_{e\in\mathring{\mathcal{E}}_h^{i} }\|[u-u_I]\|_{0,e}^2 \right)^{1/2}\\
     \le&C \left( \sum_{e\in\mathring{\mathcal{E}}_h^{i} }h^{-1} \left( \|\sqrt{\beta} \nabla v_h\|_{0, T_{e,1}} +  \|\sqrt{\beta} \nabla v_h\|_{0, T_{e,2}}\right)^2 \right)^{1/2}  \left( \sum_{e\in\mathring{\mathcal{E}}_h^{i} }h^5\|u\|_{2,  \infty, \Omega^-\cup  \Omega^+}^2 \right)^{1/2}\\
\le&Ch^{2}\|u\|_{2,  \infty, \Omega^-\cup  \Omega^+} \left(  \sum_{T\in \mathcal{T}_h^i}   \|\sqrt{\beta} \nabla v_h\|_{0, T} ^2 \right)^{1/2}   \left(  \sum_{T\in \mathcal{T}_h^i} 1 \right)^{1/2}\\
\le &Ch^{3/2}\|u\|_{2,  \infty, \Omega^-\cup  \Omega^+}\|v_h\|_h,
 \end{split}
\end{equation}
where we have also used the fact $ \sum_{T\in \mathcal{T}_h^i} 1 \approx \mathcal{O}(h^{-1})$.
For $I_6$, by the Cauchy-Schwartz inequality and \eqref{eq:edgeapprox}, we have
\begin{equation}
\begin{split}
 I_6 = &\sum_{e\in\mathring{\mathcal{E}}_h^{i} }\int_e\frac{\sigma_e^0}{|e|} [u-u_I][v_h]ds \\
 \le& \left( \sum_{e\in\mathring{\mathcal{E}}_h^{i} }\int_e \frac{\sigma_e^0}{|e|}[ u-u_I]^2 ds\right)^{1/2}
   \left( \sum_{e\in\mathring{\mathcal{E}}_h^{i} }\int_e\frac{\sigma_e^0}{|e|} [v_h]^2 ds\right)^{1/2}\\
    \le&  Ch^{-1/2}\left( \sum_{e\in\mathring{\mathcal{E}}_h^{i} } \|[u-u_I]\|_{0,e}^2\right)^{1/2}\|v_h\|_h\\
    \le & Ch^{2}\|u\|_{2,  \infty, \Omega^-\cup  \Omega^+}\left( \sum_{T\in \mathcal{T}_h^i}  1\right)^{1/2}\|v_h\|_h\\
    \le &  Ch^{3/2}\|u\|_{2,  \infty, \Omega^-\cup  \Omega^+}\|v_h\|_h,
   \end{split}
\end{equation}
where we have also used the fact $ \sum_{T\in \mathcal{T}_h^i} 1 \approx \mathcal{O}(h^{-1})$.
Summarizing the bounds for $I_i$ ($i = 1, 2, \cdots, 6$) together gives \eqref{eq:superclose}.
\end{proof}

\begin{remark}\label{rmk:reduce}
 When the discontinuity disappears, $\mathring{\mathcal{E}}_h^{i}$ will become empty.  In that case,  $I_i$ (i = 3, 4, 5, 6) will become zero and we can
 reproduce  the standard supercloseness result  \cite{XuZhang2004}.
\end{remark}

\begin{remark}
 Here we discuss the triangle element.  For the bilinear PPIFE method,  we can prove  similar supercloseness results by adapting
 the integral identities in \cite{LinYanZhou1991,LinYan1996}, the trace inequalities for bilinear IFE functions \cite{LinLinZhang2015}, and the same techniques that we used here to deal with the interface part.
\end{remark}

Based on the supercloseness results, we can prove the following theorem:
\begin{theorem}\label{thm:superinterp}
 Assume the same hypothesis in Theorem \ref{thm:supercloseness} and let $u_h$ be the IFE solution of discrete variational problem
 \eqref{eq:ppifem} ; then
 \begin{equation}\label{eq:superconvergence}
\|u_h-u_I\|_h \le C\left(h^{1+\rho}(\|u\|_{3, \Omega^+\cup\Omega^-} + \|u\|_{2, \infty, \Omega^+\cup\Omega^-}) + Ch^{3/2} \|u\|_{2, \infty, \Omega^+\cup\Omega^-}\right),
\end{equation}
where $\rho = \min(\alpha, \frac{\sigma}{2}, \frac{1}{2})$.
\end{theorem}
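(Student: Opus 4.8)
The plan is to combine the coercivity of $a_h$ (Lemma \ref{lem:coercivity}) with a Galerkin orthogonality relation and the supercloseness estimate of Theorem \ref{thm:supercloseness}, following the classical route from supercloseness to superconvergence. I set $w_h := u_h - u_I$, which lies in $V_{h,0}$ since both $u_h$ and $u_I$ do; if $w_h = 0$ the claim is trivial, so I assume $\|w_h\|_h > 0$.

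First I would establish the Galerkin orthogonality
\begin{equation*}
a_h(u - u_h, v_h) = 0 \qquad \text{for all } v_h \in V_{h,0}.
\end{equation*}
This follows by subtracting the discrete equation \eqref{eq:ppifem}, $a_h(u_h, v_h) = (f, v_h)$, from the variational identity \eqref{eq:var}, $a_h(u, v_h) = (f, v_h)$. The latter applies because $V_{h,0} \subset X_{h,0}$, and because the exact solution $u \in H^1(\Omega)$ is continuous across every interior edge, so it is a legitimate argument in the consistent formulation of \cite{LinLinZhang2015}.

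Next, invoking coercivity with the test function $w_h$ and then the orthogonality just established, I would write
\begin{equation*}
C\|w_h\|_h^2 \le a_h(w_h, w_h) = a_h(u_h - u, w_h) + a_h(u - u_I, w_h) = a_h(u - u_I, w_h).
\end{equation*}
Applying Theorem \ref{thm:supercloseness} with $v_h = w_h$ bounds $a_h(u - u_I, w_h)$ by the right-hand side of \eqref{eq:superclose} with $|v_h|_h$ replaced by $|w_h|_h$. Since the energy seminorm is dominated by the full energy norm, $|w_h|_h \le \|w_h\|_h$, dividing through by $\|w_h\|_h > 0$ yields \eqref{eq:superconvergence}.

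The argument is largely routine once these ingredients are assembled; the point that deserves the most care is the Galerkin orthogonality, i.e. verifying that the exact solution $u$ is admissible in \eqref{eq:var} and that the consistency terms reproduce correctly. This rests on $[u] = 0$ across every interior edge, a consequence of the continuity condition \eqref{eq:valuejump} together with $u \in H^1(\Omega)$, which makes the penalty and the symmetric stabilization contributions to $a_h(u, \cdot)$ vanish and leaves exactly the consistent form matching \eqref{eq:ppifem}. One should also confirm the hypotheses under which Lemma \ref{lem:coercivity} holds, namely $\epsilon = 1$ unconditionally, or $\epsilon \in \{0, -1\}$ with $\sigma_e^0$ sufficiently large, since coercivity is the linchpin of the whole estimate.
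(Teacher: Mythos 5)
Your proposal is correct and follows essentially the same route as the paper: Galerkin orthogonality from $V_{h,0}\subset X_{h,0}$, coercivity applied to $w_h=u_h-u_I$, and the supercloseness bound of Theorem \ref{thm:supercloseness} with $v_h=w_h$. The extra care you take in justifying the orthogonality and the coercivity hypotheses is sound but not a departure from the paper's argument.
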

\begin{proof}
 Since $V_{h,0}$ is a subset of $X_{h,0}$, it follows that
 \begin{equation}
a_h(u-u_h, v_h) = 0, \quad\forall v_h\in V_{h,0}.
\end{equation}
  Then we have
\begin{equation}
a_h(u_h-u_I, v_h)  = a_h(u-u_I, v_h), \quad \forall v_h\in V_{h,0}.
\end{equation}
Taking $v_h = u_h-u_I$ and using Theorem \ref{thm:supercloseness} and Lemma \ref{lem:coercivity}, we prove \eqref{eq:superconvergence}.
\end{proof}

\begin{remark}
 Similarly as Remark \ref{rmk:reduce}, when the discontinuity disappear,  \eqref{eq:superconvergence} will reduce to  the standard supercloseness result  \cite{XuZhang2004}.
\end{remark}

\subsection{Superconvergence results}  In this subsection, using the supercloseness results,
 we show that the recovered gradient  of the PPIFE solution is superconvergent to the exact gradient.

To define the gradient recovery operator introduced \cite{GuoYang2016b}, we first generate a local 
body-fitted mesh $\widehat{\mathcal{T} }_h$ by adding some new vertices into $\mathcal{N}_h$ \cite{GuoYang2016b, LiLinWu2003}.  
Suppose $ \widehat{X}_h$ is a $C^0$ linear finite element space defined on $\widehat{\mathcal{T} }_h$, we
defined an enrich operator  $E_h: V_h\rightarrow  \widehat{X}_h$ by averaging the discontinuous values on interface vertices as in \cite{GuoYang2016b}.

Let $\Gamma_h$ be the approximated interface by connecting the intersection points of edges with $\Gamma$.  
We can category  the triangulation $\widehat{\mathcal{T}}_h$ into the following two disjoint sets:
\begin{align}
 &\widehat{\mathcal{T}}^-_h:=\left\{  T\in \mathcal{T}_h| \text{ all three vertices of  } T \text{ are in  }\overline{\Omega^-}  \right\},\label{equ:mmesh}\\
 & \widehat{\mathcal{T}}^+_h:=\left\{  T\in \mathcal{T}_h| \text{ all three vertices of  } T \text{ are in  }\overline{\Omega^+}  \right\} . \label{equ:pmesh}
\end{align}
Let $\Omega^-_h = \cup_{T\in \widehat{\mathcal{T}}_h^-} T$ and $\Omega^+_h = \cup_{T\in \widehat{\mathcal{T}}_h^+} T$.
Suppose $\widehat{X}_h^-$ and $\widehat{X}_h^+$ are the continuous linear finite element spaces defined on $ \widehat{\mathcal{T}}^-_h$ and $ \widehat{\mathcal{T}}^+_h$, respectively.

Denote the PPR gradient recovery operator on $\widehat{X}_h^-$ by $G_h^-$.  Then $G_h^-$ is a linear bounded operator from $\widehat{X}_h^-$ to $\widehat{X}_h^-\times \widehat{X}_h^-$.
Similarly, let $G_h^+$ be PPR gradient recovery operator from $\widehat{X}_h^+$ to $\widehat{X}_h^+\times \widehat{X}_h^+$. Then, for any $u_h \in V_h$, let $G_h^I:  \widehat{X}_h \rightarrow ( \widehat{X}_h^-\cup  \widehat{X}_h^+) \times ( \widehat{X}_h^-\cup  \widehat{X}_h^+)$ be the immersed polynomial preserving recovery (IPPR) operator
proposed in \cite{GuoYang2016} which is defined as  following. 
\begin{equation}
(G_h^I u_h) (z)=
\left\{
\begin{array}{ccc}
   (G_h^- u_h) (z)  &  \text{if } z\in \overline{\Omega^-_h}, \\
   (G_h^+ u_h) (z)  &   \text{if } z\in \overline{\Omega^+_h}.
\end{array}
\right.
\end{equation}
  Then the recovered gradient of PPIFE solution $u_h$  is defined as
\begin{equation}\label{equ:rg}
R_hu_h = G_h^I(E_hv_h).
\end{equation}

The linear boundedness and consistency of the gradient recovery operator $R_h$ are showed in \cite{GuoYang2016b}.
The previous established supercloseness result enables us to prove the following main superconvergence result:
\begin{theorem}\label{thm:super}
 Assume the same hypothesis in Theorem \ref{thm:supercloseness} and let $u_h$ be the IFE solution of discrete variational problem
 \eqref{eq:ppifem} ; then
 \begin{equation}\label{equ:super}
\|\nabla u-R_hu_h\|_{0,\Omega} \le C\left(h^{1+\rho}(\|u\|_{3, \Omega^+\cup\Omega^-} + \|u\|_{2, \infty, \Omega^+\cup\Omega^-}) + Ch^{3/2} \|u\|_{2, \infty, \Omega^+\cup\Omega^-}\right).
\end{equation}
\end{theorem}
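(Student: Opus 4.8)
The plan is to route the error through the recovered gradient of the IFE interpolant $R_h u_I$, so that one piece is governed by the \emph{consistency} of the recovery operator and the other by the \emph{supercloseness} estimate of Theorem \ref{thm:superinterp}. Using the triangle inequality together with the linearity of $R_h = G_h^I(E_h\,\cdot\,)$,
\begin{equation}
\|\nabla u - R_h u_h\|_{0,\Omega} \le \|\nabla u - R_h u_I\|_{0,\Omega} + \|R_h(u_I - u_h)\|_{0,\Omega}.
\end{equation}
I expect the first term to be the consistency contribution and the second to be the \emph{stability $\times$ supercloseness} contribution; each will be shown to be bounded by the right-hand side of \eqref{equ:super}.

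For the second term I would invoke the linear boundedness of $R_h$ established in \cite{GuoYang2016b}, which yields $\|R_h w\|_{0,\Omega} \le C\,|w|_{1,\mathcal{T}_h}$ for every $w \in V_h$, where $|\cdot|_{1,\mathcal{T}_h}$ denotes the broken $H^1$-seminorm $\big(\sum_{T\in\mathcal{T}_h}|w|_{1,T}^2\big)^{1/2}$. Since $\beta \ge \beta_0 > 0$, the energy norm dominates this seminorm via $\beta_0^{1/2}\,|w|_{1,\mathcal{T}_h} \le \|w\|_h$, so that
\begin{equation}
\|R_h(u_I - u_h)\|_{0,\Omega} \le C\,|u_I - u_h|_{1,\mathcal{T}_h} \le C\,\beta_0^{-1/2}\,\|u_I - u_h\|_h.
\end{equation}
Theorem \ref{thm:superinterp} then bounds $\|u_I - u_h\|_h$ exactly by the right-hand side of \eqref{equ:super}, which settles this half.

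For the first term I would appeal to the consistency of $R_h$ proved in \cite{GuoYang2016b, GuoYang2016}: when applied to the interpolant of a piecewise smooth $u$, the operator $G_h^I \circ E_h$ reduces on $\Omega^-_h$ and $\Omega^+_h$ to the standard PPR acting on the smooth restrictions $u|_{\Omega^-}$ and $u|_{\Omega^+}$, for which the classical polynomial-preserving superconvergence gives an $\mathcal{O}(h^{1+\rho})$ recovery error under Condition $(\sigma,\alpha)$, while the mismatch region between the true interface $\Gamma$ and its polygonal approximation $\Gamma_h$ is a strip of area $\mathcal{O}(h^2)$ contributing only a higher-order remainder. This produces $\|\nabla u - R_h u_I\|_{0,\Omega} \le C\,h^{1+\rho}\big(\|u\|_{3,\Omega^+\cup\Omega^-} + \|u\|_{2,\infty,\Omega^+\cup\Omega^-}\big)$, which is again dominated by the right-hand side of \eqref{equ:super}. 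Summing the two bounds proves the theorem.

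The genuinely delicate point — though largely offloaded to the cited references — is the consistency estimate near the interface: one must verify that the enrichment $E_h$, which averages the doubled values at interface vertices, together with the locally body-fitted submesh $\widehat{\mathcal{T}}_h$, does not spoil the polynomial-preserving superconvergence of PPR, and that the thin strip bounded by $\Gamma$ and $\Gamma_h$ enters only at order $o(h^{1+\rho})$. By contrast, the stability-plus-supercloseness half of the argument is routine once the boundedness of $R_h$ is in hand and the lower bound $\beta \ge \beta_0$ is used to pass from the energy norm $\|\cdot\|_h$ to the broken $H^1$-seminorm.
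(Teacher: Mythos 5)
Your proposal is correct and follows essentially the same route as the paper: the same triangle-inequality split through $R_h u_I$, with the first term handled by the consistency of the recovery operator from \cite{GuoYang2016b} (the paper cites its Theorem 3.7 for an $\mathcal{O}(h^2)$ bound, even slightly stronger than your $\mathcal{O}(h^{1+\rho})$) and the second term by the boundedness of $R_h = G_h^I\circ E_h$ combined with Theorem \ref{thm:superinterp}. The only cosmetic difference is that the paper spells out the intermediate steps through $E_h$ and the piecewise operators $G_h^{\pm}$ on $\Omega_h^{\mp}$, which you compress into a single boundedness statement for $R_h$.
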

\begin{proof}
 We decompose $\nabla u-R_hu_h$ as  $ (\nabla u - R_hu_I) -  (R_hu_I - R_hu_h)$.   Then the triangle inequality implies that
 \begin{equation}\label{equ:pi}
 \|\nabla u-R_hu_h\|_{0,\Omega} \le \|\nabla u - R_hu_I\|_{0,\Omega} +  \| R_hu_I - R_hu_h\|_{0,\Omega}:=I_1+I_2.
\end{equation}
According to Theorem 3.7 in \cite{GuoYang2016b}, we have
\begin{equation}\label{equ:pi1}
I_1 \lesssim h^2\|u\|_{3, \Omega^-\cup\Omega^+}.
\end{equation}
Using definition \eqref{equ:rg}, we obtain that
\begin{equation}\label{equ:pi2}
\begin{split}
 I_2 =&  \| G_hE_h(u_I - u_h)\|_{0,\Omega} \\
    \lesssim & \| G_h E_h(u_I-u_h)\|_{0,\Omega_h^-} + \| G_h E_h(u_I-u_h)\|_{0,\Omega_h^+}\\
    \lesssim & \|\nabla E_h(u_I-u_h)\|_{0,\Omega_h^-} +  \|\nabla  E_h(u_I-u_h)\|_{0,\Omega_h^+}\\
    \lesssim & \|\nabla E_h(u_I-u_h)\|_{0,\Omega}\\
    \lesssim & \|\nabla (u_I-u_h)\|_{0,\Omega}\\
    \lesssim & h^{1+\rho}(\|u\|_{3, \Omega^+\cup\Omega^-} + \|u\|_{2, \infty, \Omega^+\cup\Omega^-}) + Ch^{3/2} \|u\|_{2, \infty, \Omega^+\cup\Omega^-}
\end{split}
\end{equation}
where we have used the boundedness property of $G_h^{\pm}$ in the second inequality,  
Corollary 3.4 of \cite{GuoYang2016b} in the fourth inequality, and Theorem \ref{thm:superinterp} in the last inequality.  
Combining \eqref{equ:pi}-\eqref{equ:pi2} completes the proof of  \eqref{equ:super}.
\end{proof}

The gradient recovery operator $R_h$ naturally provides an {\it a posteriori } error estimators for the  PPIFE method.
 We define a local {\it a posteriori} error estimator on element $T\in \mathcal{T}_h$ as
\begin{equation}\label{equ:localind}
\eta_T =
\left\{
\begin{array}{lcc}
    \|\beta^{1/2}(R_hu_h - \nabla u_h)\|_{0, T}, &  \text{if } T \in \mathcal{T}_h^r, \\
   \left(\sum\limits_{\widehat{T}\subset T, \widehat{T}\in \widehat{T}_h} \|\beta^{1/2}(R_hu_h - \nabla u_h)\|_{0, \widehat{T}}^2\right)^{\frac{1}{2}}, &   \text{if } T \in \mathcal{T}_h^i,
\end{array}
\right.
\end{equation}
and the corresponding global error estimator  as
\begin{equation}\label{equ:globalind}
\eta_h = \left( \sum_{T\in \mathcal{T}_h}\eta_T^2\right)^{1/2}.
\end{equation}

With the above superconvergence result, we are ready to prove the asymptotic exactness of error estimators based on the recovery operator $R_h$.
\begin{theorem}\label{thm:asyexact}
 Assume the same hypothesis in Theorem \ref{thm:supercloseness} and let $u_h$ be the IFE solution of discrete variational problem
 \eqref{eq:ppifem}.  Further assume that there is a constant $C(u)>0$ such that
 \begin{equation}\label{equ:satassum}
 \|\nabla (u-u_h)\|_{0,\Omega} \ge C(u) h.
\end{equation}
Then it holds that
\begin{equation}
\left |  \frac{\eta_h}{\|\nabla (u-u_h)\|_{0,\Omega}}  -1 \right | \lesssim h^{\rho}.
\end{equation}
\end{theorem}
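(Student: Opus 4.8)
The plan is to run the classical recovery-estimator asymptotic-exactness argument, feeding in the superconvergence bound of Theorem \ref{thm:super} as the crucial ingredient. First I would observe that, by the definitions \eqref{equ:localind}--\eqref{equ:globalind}, the global indicator is nothing but the $\beta^{1/2}$-weighted $L^2$ norm of the recovered-minus-discrete gradient, $\eta_h = \|\beta^{1/2}(R_hu_h - \nabla u_h)\|_{0,\Omega}$, since summing the local contributions over the regular elements and over the sub-triangles $\widehat{T}$ of the interface elements simply reassembles the integral over all of $\Omega$.

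The core of the proof is the algebraic splitting $R_hu_h - \nabla u_h = \nabla(u-u_h) - (\nabla u - R_hu_h)$, which exhibits the indicator as the true (weighted) gradient error perturbed by the recovery error. Applying the reverse triangle inequality in the $\beta^{1/2}$-weighted norm then gives
\begin{equation}
\left| \eta_h - \|\beta^{1/2}\nabla(u-u_h)\|_{0,\Omega}\right| \le \|\beta^{1/2}(\nabla u - R_hu_h)\|_{0,\Omega} \lesssim \|\nabla u - R_hu_h\|_{0,\Omega},
\end{equation}
the last inequality using that $\beta$ is bounded above. By Theorem \ref{thm:super} the right-hand side is $\lesssim h^{1+\rho}$; here I would record that, because $\rho = \min(\alpha,\tfrac{\sigma}{2},\tfrac12) \le \tfrac12$, one has $h^{1+\rho} \ge h^{3/2}$, so the $h^{1+\rho}$ term dominates and absorbs the $h^{3/2}$ contribution appearing in \eqref{equ:super}.

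To finish, I would divide through by the true error. The saturation hypothesis \eqref{equ:satassum}, combined with the uniform lower bound $\beta \ge \beta_0$, yields $\|\beta^{1/2}\nabla(u-u_h)\|_{0,\Omega} \ge \beta_0^{1/2} C(u)\, h$; dividing the previous display by this quantity converts the gap estimate of order $h^{1+\rho}$ into the claimed bound $\lesssim h^{\rho}$ for $\left| \eta_h/\|\nabla(u-u_h)\|_{0,\Omega} - 1\right|$.

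The argument is short because Theorem \ref{thm:super} does the heavy lifting; the only genuinely delicate point is the matching of the two norms appearing in the ratio. For a true ``ratio tends to $1$'' statement one must ensure that $\eta_h$ and the error in the denominator are measured in the same ($\beta^{1/2}$-weighted) norm, so that the reverse triangle inequality produces asymptotic exactness rather than the weaker statement of two-sided boundedness coming from mere norm equivalence. The saturation assumption is exactly what guarantees the denominator does not decay faster than the $\mathcal{O}(h^{1+\rho})$ numerator gap; without a lower bound of order $h$ on the genuine error, asymptotic exactness would be lost.
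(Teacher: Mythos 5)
Your argument is precisely the paper's proof, which is given there in a single line as following from Theorem \ref{thm:super}, the saturation assumption \eqref{equ:satassum}, and the triangle inequality; your expanded version (identify $\eta_h$ with $\|\beta^{1/2}(R_hu_h-\nabla u_h)\|_{0,\Omega}$, apply the reverse triangle inequality, invoke the $\mathcal{O}(h^{1+\rho})$ recovery bound, and divide by the $\mathcal{O}(h)$ lower bound on the error) is correct. The one delicate point you flag --- that $\eta_h$ is $\beta^{1/2}$-weighted while the denominator $\|\nabla(u-u_h)\|_{0,\Omega}$ in the stated ratio is not, so the clean asymptotic-exactness conclusion really pertains to the weighted error norm --- is a genuine imprecision in the theorem as written that the paper's one-line proof passes over, and your remark on it is well taken.
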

\begin{proof}
 It follows  from  Theorem \ref{thm:super},  \eqref{equ:satassum}, and the triangle inequality.
\end{proof}
\begin{remark}
 Theorem \ref{thm:asyexact} implies that \eqref{equ:localind} (or \eqref{equ:globalind}) is an asymptotically exact {\it a posteriori } error estimator for PPIFE method.
\end{remark}

\section{Numerical Examples}
In the section, the previous established supercloseness and superconvergence theory are demonstrated by three numerical examples.
The first two are benchmark problems for testing numerical methods for linear interface problem.
For that two examples,  the computational domain  are chosen as $\Omega = [-1, 1]\times [-1, 1]$.
  The uniform triangulation of $\Omega$ is obtained by dividing $\Omega$ into $N^2$
sub-squares and then dividing each sub-square into two right triangles. The resulting uniform  mesh size is $h = 1/N$.
The last example is a nonlinear interface problem.
We test the examples using three different PPIFE method\cite{LinLinZhang2015}:
the symmetric PPIFE method (SPPIFEM),  incomplete  PPIFE method (IPPIFEM),  and
non-symmetric PPIFE method (NPPIFEM), which are corresponding to
$\epsilon = -1$, $\epsilon=0$, and $\epsilon=1$, respectively. We choose
the penalty parameter $\sigma_e^0=\sqrt{\max(\beta^-,\beta^+)}$ for SPPIFEM and IPPIFEM and
$\sigma_e^0=1$ for NPPIFEM.  For convenience, we shall adopt the following error norms in all the examples:
\begin{equation}
De:=\|u-u_h\|_{1,\Omega},\quad D^ie:=\|\nabla u_I- \nabla u_h\|_{0, \Omega},\quad
D^re:=\|\nabla u-R_hu_h\|_{0, \Omega}.\\
 \end{equation}

{\bf Example 4.1.} In this example, we  consider  the elliptic interface problem  \eqref{eq:model} with a circular interface of radius $r_0 = \frac{\pi}{6}$ as studied in \cite{LiLinWu2003}.
 The exact solution is
\begin{equation*}
u(z) = \left\{
\begin{array}{ll}
    \frac{r^3}{\beta^-}   &  \text{if }   z\in \Omega_-, \\
      \frac{r^3}{\beta^+} + \left( \frac{1}{\beta^-}-\frac{1}{\beta^+} \right)r_0^3&  \text{if } z \in \Omega^+,\\
   \end{array}
\right.
\end{equation*}
where $r = \sqrt{x^2+y^2}$.

We use two typical jump rations: $\beta^-/\beta^+=1/10$ and $\beta^-/\beta^+=1/1000$.
Tables \ref{tab:ex1aspp}-\ref{tab:ex1bnpp} report numerical results. For $De$, all three partially penalized
finite element methods converge  with the optimal rate $\mathcal{O}(h)$ for both differential jump ratios.
As for $D^ie$ and $D^re$,  $\mathcal{O}(h^{1.5})$ order of convergence can be clearly observed for all cases,
which support our Theorems 3.3 and 3.4.

\begin{table}[htb!]
\centering
\caption{SPPIFEM for Example 4.1 with $\beta^+=10, \beta^-=1$. }\label{tab:ex1aspp}
\begin{tabular}{|c|c|c|c|c|c|c|c|}
\hline
 $h$ & $De$ & order& $D^{i}e$ & order& $D^{r}_re$ & order\\ \hline\hline
 1/16 &7.20e-02&--&3.64e-03&--&1.91e-02&--\\ \hline
 1/32 &3.62e-02&0.99&1.34e-03&1.44&5.10e-03&1.91\\ \hline
 1/64 &1.81e-02&1.00&4.64e-04&1.53&1.68e-03&1.60\\ \hline
 1/128 &9.07e-03&1.00&1.58e-04&1.56&5.24e-04&1.68\\ \hline
 1/256 &4.53e-03&1.00&5.76e-05&1.45&1.71e-04&1.62\\ \hline
 1/512 &2.27e-03&1.00&2.00e-05&1.53&5.87e-05&1.54\\ \hline
 1/1024 &1.13e-03&1.00&7.07e-06&1.50&1.94e-05&1.60\\ \hline
\end{tabular}
\end{table}

\begin{table}[htb!]
\centering
\caption{IPPIFEM for Example 4.1 with $\beta^+=10, \beta^-=1$. }\label{tab:ex1aipp}
\begin{tabular}{|c|c|c|c|c|c|c|c|}
\hline
 $h$ & $De$ & order& $D^{i}e$ & order& $D^{r}_re$ & order\\ \hline\hline
 1/16 &7.20e-02&--&3.61e-03&--&1.90e-02&--\\ \hline
 1/32 &3.62e-02&0.99&1.22e-03&1.57&5.01e-03&1.92\\ \hline
 1/64 &1.81e-02&1.00&3.98e-04&1.62&1.63e-03&1.62\\ \hline
 1/128 &9.07e-03&1.00&1.38e-04&1.53&5.07e-04&1.68\\ \hline
 1/256 &4.53e-03&1.00&4.94e-05&1.48&1.64e-04&1.63\\ \hline
 1/512 &2.27e-03&1.00&1.72e-05&1.52&5.58e-05&1.55\\ \hline
 1/1024 &1.13e-03&1.00&6.05e-06&1.51&1.83e-05&1.61\\ \hline
\end{tabular}
\end{table}

\begin{table}[htb!]
\centering
\caption{NPPIFEM for Example 4.1 with $\beta^+=10, \beta^-=1$. }\label{tab:ex1anpp}
\begin{tabular}{|c|c|c|c|c|c|c|c|}
\hline
 $h$ & $De$ & order& $D^{i}e$ & order& $D^{r}_re$ & order\\ \hline\hline
 1/16 &7.20e-02&--&3.90e-03&--&1.89e-02&--\\ \hline
 1/32 &3.62e-02&0.99&1.29e-03&1.59&4.97e-03&1.93\\ \hline
 1/64 &1.81e-02&1.00&4.18e-04&1.63&1.59e-03&1.64\\ \hline
 1/128 &9.07e-03&1.00&1.44e-04&1.53&4.97e-04&1.68\\ \hline
 1/256 &4.53e-03&1.00&5.17e-05&1.48&1.60e-04&1.64\\ \hline
 1/512 &2.27e-03&1.00&1.80e-05&1.52&5.43e-05&1.56\\ \hline
 1/1024 &1.13e-03&1.00&6.34e-06&1.51&1.77e-05&1.61\\ \hline
\end{tabular}
\end{table}

\begin{table}[htb!]
\centering
\caption{SPPIFEM for Example 4.1 with $\beta^+=1000, \beta^-=1$. }\label{tab:ex1bspp}
\begin{tabular}{|c|c|c|c|c|c|c|c|}
\hline
 $h$ & $De$ & order& $D^{i}e$ & order& $D^{r}_re$ & order\\ \hline\hline
 1/16 &2.47e-02&--&4.60e-03&--&1.33e-02&--\\ \hline
 1/32 &1.31e-02&0.91&1.78e-03&1.37&3.62e-03&1.88\\ \hline
 1/64 &6.56e-03&1.00&6.44e-04&1.47&1.36e-03&1.42\\ \hline
 1/128 &3.31e-03&0.99&2.17e-04&1.57&4.60e-04&1.56\\ \hline
 1/256 &1.65e-03&1.01&7.70e-05&1.49&1.38e-04&1.73\\ \hline
 1/512 &8.23e-04&1.00&2.72e-05&1.50&4.71e-05&1.55\\ \hline
 1/1024 &4.12e-04&1.00&9.60e-06&1.50&1.59e-05&1.57\\ \hline
\end{tabular}
\end{table}

\begin{table}[htb!]
\centering
\caption{IPPIFEM for Example 4.1 with $\beta^+=1000, \beta^-=1$. }\label{tab:ex1bipp}
\begin{tabular}{|c|c|c|c|c|c|c|c|}
\hline
 $h$ & $De$ & order& $D^{i}e$ & order& $D^{r}_re$ & order\\ \hline\hline
 1/16 &2.54e-02&--&8.58e-03&--&1.50e-02&--\\ \hline
 1/32 &1.35e-02&0.91&3.86e-03&1.15&4.99e-03&1.58\\ \hline
 1/64 &6.65e-03&1.02&1.29e-03&1.58&1.79e-03&1.48\\ \hline
 1/128 &3.33e-03&1.00&4.36e-04&1.57&5.46e-04&1.72\\ \hline
 1/256 &1.65e-03&1.01&1.55e-04&1.50&1.83e-04&1.58\\ \hline
 1/512 &8.25e-04&1.00&5.60e-05&1.47&6.61e-05&1.46\\ \hline
 1/1024 &4.12e-04&1.00&2.01e-05&1.48&2.38e-05&1.48\\ \hline
\end{tabular}
\end{table}

\begin{table}[htb!]
\centering
\caption{NPPIFEM for Example 4.1 with $\beta^+=1000, \beta^-=1$. }\label{tab:ex1bnpp}
\begin{tabular}{|c|c|c|c|c|c|c|c|}
\hline
 $h$ & $De$ & order& $D^{i}e$ & order& $D^{r}_re$ & order\\ \hline\hline
 1/16 &2.56e-02&--&9.39e-03&--&1.55e-02&--\\ \hline
 1/32 &1.36e-02&0.91&4.29e-03&1.13&5.34e-03&1.54\\ \hline
 1/64 &6.67e-03&1.03&1.41e-03&1.61&1.88e-03&1.50\\ \hline
 1/128 &3.34e-03&1.00&4.84e-04&1.54&5.65e-04&1.74\\ \hline
 1/256 &1.65e-03&1.01&1.72e-04&1.49&1.95e-04&1.53\\ \hline
 1/512 &8.25e-04&1.00&6.30e-05&1.45&7.23e-05&1.43\\ \hline
 1/1024 &4.12e-04&1.00&2.29e-05&1.46&2.67e-05&1.44\\ \hline
\end{tabular}
\end{table}

{\bf Example 4.2.}  In this example, we consider the interface problem  \eqref{eq:model} with a cardioid interface as in \cite{HouLiu2005}.
The interface curve $\Gamma$ is  the zero level of the function
\begin{equation*}
\phi(x,y) = (3(x^2+y^2)-x)^2-x^2-y^2,
\end{equation*}
as shown Figure \ref{fig:ex42}.    We choose  the exact solution $u(x,y) = \phi(x,y)/\beta(x,y)$, where
\begin{equation*}
\beta(x,y) =
\left\{
\begin{array}{lcc}
    xy+3 &  \text{if } (x,y)\in \Omega^-, \\
   100 &   \text{if } (x,y)\in \Omega^+.
\end{array}
\right.
\end{equation*}

Note that   the interface  is not Lipschitz-continuous and has a singular point at the origin.
Tables \ref{tab:ex2spp}-\ref{tab:ex2npp} display the numerical data. We observe
the same supercloseness and superconvergence phenomena as predicted by our theory.

\begin{figure}[ht]
\centering
     \includegraphics[width=0.5\textwidth]{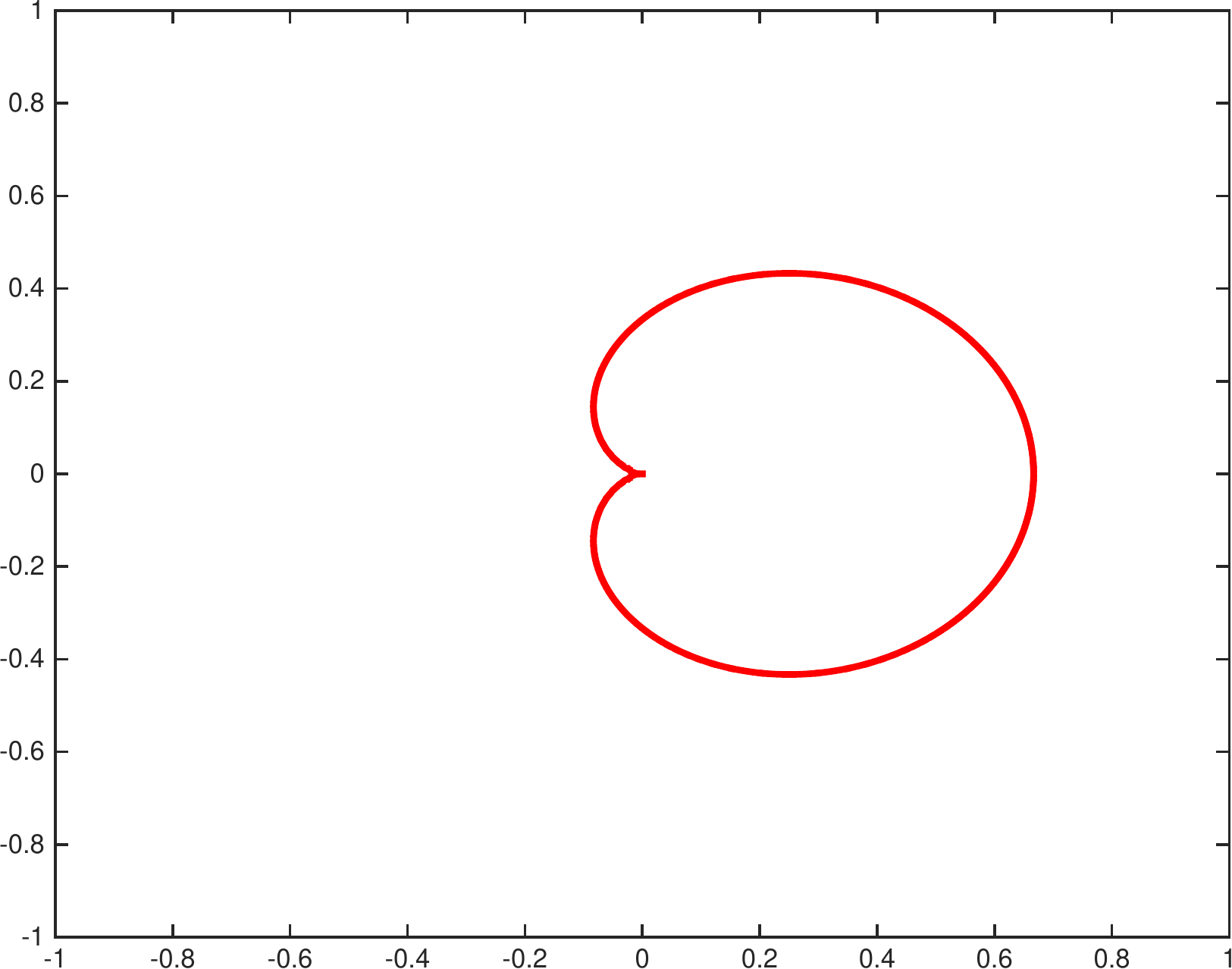}
\caption{Shape of interface  for Example 4.2}
\label{fig:ex42}
\end{figure}

\begin{table}[htb!]
\centering
\caption{SPPIFEM for Example 4.2 .}\label{tab:ex2spp}
\begin{tabular}{|c|c|c|c|c|c|c|c|}
\hline
 $h$ & $De$ & order& $D^{i}e$ & order& $D^{r}_re$ & order\\ \hline\hline
 1/16 &5.77e-02&--&5.71e-03&--&2.48e-02&--\\ \hline
 1/32 &3.03e-02&0.93&2.39e-03&1.26&7.88e-03&1.66\\ \hline
 1/64 &1.51e-02&1.01&8.55e-04&1.48&2.27e-03&1.80\\ \hline
 1/128 &7.43e-03&1.02&3.07e-04&1.48&7.10e-04&1.68\\ \hline
 1/256 &3.71e-03&1.00&1.13e-04&1.45&2.36e-04&1.59\\ \hline
 1/512 &1.86e-03&1.00&3.94e-05&1.52&8.84e-05&1.41\\ \hline
 1/1024 &9.32e-04&1.00&1.39e-05&1.50&3.08e-05&1.52\\ \hline
\end{tabular}
\end{table}

\begin{table}[htb!]
\centering
\caption{IPPIFEM for Example 4.2.}\label{tab:ex2ipp}
\begin{tabular}{|c|c|c|c|c|c|c|c|}
\hline
 $h$ & $De$ & order& $D^{i}e$ & order& $D^{r}_re$ & order\\ \hline\hline
 1/16 &5.77e-02&--&6.45e-03&--&2.46e-02&--\\ \hline
 1/32 &3.03e-02&0.93&2.67e-03&1.27&7.63e-03&1.69\\ \hline
 1/64 &1.51e-02&1.01&9.68e-04&1.46&2.25e-03&1.76\\ \hline
 1/128 &7.43e-03&1.02&3.55e-04&1.45&6.98e-04&1.69\\ \hline
 1/256 &3.71e-03&1.00&1.25e-04&1.51&2.30e-04&1.60\\ \hline
 1/512 &1.86e-03&1.00&4.39e-05&1.51&8.51e-05&1.43\\ \hline
 1/1024 &9.32e-04&1.00&1.54e-05&1.51&2.95e-05&1.53\\ \hline
\end{tabular}
\end{table}

\begin{table}[htb!]
\centering
\caption{NPPIFEM for Example 4.2.}\label{tab:ex2npp}
\begin{tabular}{|c|c|c|c|c|c|c|c|}
\hline
 $h$ & $De$ & order& $D^{i}e$ & order& $D^{r}_re$ & order\\ \hline\hline
 1/16 &5.78e-02&--&7.96e-03&--&2.47e-02&--\\ \hline
 1/32 &3.03e-02&0.93&3.12e-03&1.35&7.58e-03&1.70\\ \hline
 1/64 &1.51e-02&1.01&1.17e-03&1.41&2.29e-03&1.72\\ \hline
 1/128 &7.43e-03&1.02&4.35e-04&1.43&7.16e-04&1.68\\ \hline
 1/256 &3.71e-03&1.00&1.51e-04&1.53&2.35e-04&1.61\\ \hline
 1/512 &1.86e-03&1.00&5.31e-05&1.50&8.62e-05&1.45\\ \hline
 1/1024 &9.32e-04&1.00&1.87e-05&1.51&3.00e-05&1.52\\ \hline
\end{tabular}
\end{table}

{\bf Example 4.3.}  In this example, we consider the following nonlinear interface problem
\begin{equation*}
  -\nabla \cdot (\beta(z) \nabla u(z)) + \sin(u(z)) = f(z),  \quad z \text{ in } \Omega\setminus \Gamma,
\end{equation*}
with homogeneous jump conditions \eqref{eq:valuejump} and \eqref{eq:fluxjump} where $\Omega = [-2, 2]\times [-2, 2] \setminus [-0.5, 0.5]\times [-0.5, 0.5]$.
  The interface curve $\Gamma$ is circle centered at origin with radius $r_0=\pi/3$.
 The exact solution is
\begin{equation*}
u(z) =
\left\{
\begin{array}{ll}
    \frac{\log(r)}{\beta^-},   &  \text{if }  z\in \Omega_-, \\
      \frac{\log(r)}{\beta^+} + \left( \frac{1}{\beta^-}-\frac{1}{\beta^+} \right)\log(r_0),&  \text{if } z \in \Omega^+,\\
   \end{array}
\right.
\end{equation*}
where $r =|z|=\sqrt{x^2+y^2}$.  The right hand side function $f$ and boundary condition are obtained from the exact solution.

The nonlinear interface problem is solved by  the PPIFE method
with Newton's iteration on a series of uniform meshes.
The coarsest mesh is depicted  in
Fig \ref{fig:ex43} and the finer meshes are obtained by the uniform refinement.  
Numerical results are reported in Tables \ref{tab:ex3spp}-\ref{tab:ex3npp}.
 We observe the same superconvergence and supercloseness phenomena as linear problems.

\begin{figure}[ht]
\centering
     \includegraphics[width=0.5\textwidth]{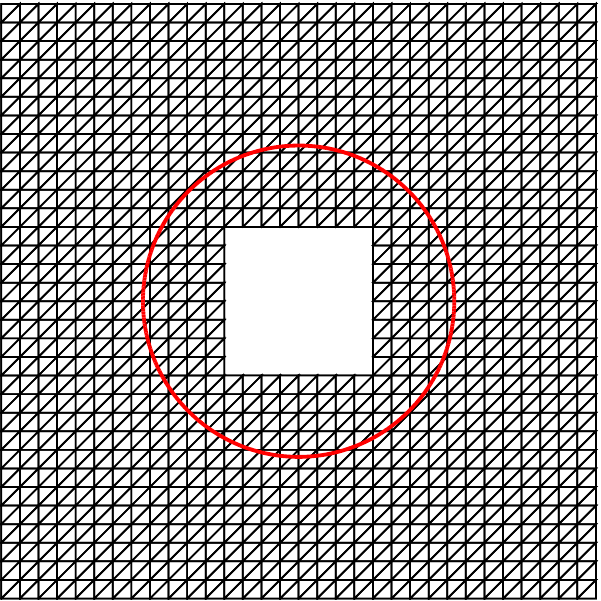}
\caption{Initial non body-fitted mesh for Example 4.3}
\label{fig:ex43}
\end{figure}

\begin{table}[htb!]
\centering
\caption{SPPIFEM for Example 4.3 with $\beta^+=1000, \beta^-=1$.}\label{tab:ex3spp}
\begin{tabular}{|c|c|c|c|c|c|c|c|}
\hline
 $h$ & $De$ & order& $D^{i}e$ & order& $D^{r}_re$ & order\\ \hline\hline
 1/8 &1.69e-01&--&2.55e-02&--&6.44e-02&--\\ \hline
 1/16 &8.53e-02&0.99&8.54e-03&1.58&1.60e-02&2.01\\ \hline
 1/32 &4.19e-02&1.03&3.03e-03&1.49&6.13e-03&1.38\\ \hline
 1/64 &2.09e-02&1.00&1.05e-03&1.53&2.17e-03&1.50\\ \hline
 1/128 &1.04e-02&1.01&3.70e-04&1.51&6.69e-04&1.70\\ \hline
 1/256 &5.17e-03&1.00&1.28e-04&1.54&2.34e-04&1.51\\ \hline
 1/512 &2.58e-03&1.00&4.46e-05&1.52&7.90e-05&1.57\\ \hline
\end{tabular}
\end{table}

\begin{table}[htb!]
\centering
\caption{IPPIFEM for Example 4.3 with $\beta^+=1000, \beta^-=1$.}\label{tab:ex3ipp}
\begin{tabular}{|c|c|c|c|c|c|c|c|}
\hline
 $h$ & $De$ & order& $D^{i}e$ & order& $D^{r}_re$ & order\\ \hline\hline
 1/8 &1.75e-01&--&5.15e-02&--&7.52e-02&--\\ \hline
 1/16 &8.71e-02&1.00&1.90e-02&1.44&2.32e-02&1.70\\ \hline
 1/32 &4.23e-02&1.04&6.15e-03&1.63&8.27e-03&1.49\\ \hline
 1/64 &2.10e-02&1.01&1.97e-03&1.64&2.49e-03&1.73\\ \hline
 1/128 &1.04e-02&1.01&6.85e-04&1.52&8.61e-04&1.53\\ \hline
 1/256 &5.17e-03&1.01&2.42e-04&1.50&3.17e-04&1.44\\ \hline
 1/512 &2.58e-03&1.00&8.77e-05&1.47&1.14e-04&1.47\\ \hline
\end{tabular}
\end{table}

\begin{table}[htb!]
\centering
\caption{NPPIFEM for Example 4.3 with $\beta^+=1000, \beta^-=1$..}\label{tab:ex3npp}
\begin{tabular}{|c|c|c|c|c|c|c|c|}
\hline
 $h$ & $De$ & order& $D^{i}e$ & order& $D^{r}_re$ & order\\ \hline\hline
1/8 &1.77e-01&--&6.03e-02&--&8.06e-02&--\\ \hline
 1/16 &8.78e-02&1.01&2.23e-02&1.44&2.55e-02&1.66\\ \hline
 1/32 &4.24e-02&1.05&6.89e-03&1.69&8.73e-03&1.55\\ \hline
 1/64 &2.10e-02&1.01&2.25e-03&1.62&2.63e-03&1.73\\ \hline
 1/128 &1.04e-02&1.02&7.84e-04&1.52&9.28e-04&1.50\\ \hline
 1/256 &5.17e-03&1.01&2.80e-04&1.49&3.47e-04&1.42\\ \hline
 1/512 &2.58e-03&1.00&1.02e-04&1.45&1.29e-04&1.43\\ \hline
\end{tabular}
\end{table}

\section{Conclusion}  In this paper, we study the superconvergence theory for partially penalized immersed finite element (PPIFE) method. Specifically, we obtain supercloseness results analogous to
standard linear finite element method. Due to the existence of the interface, we can only prove a supercloseness result of order $\mathcal{O}(h^{1.5})$.  We also notice that the supercloseness  result will reduce to the well known one for  standard linear element when the discontinuity disappears.  These results provide us a fundamental tool to
prove the $\mathcal{O}(h^{1.5})$  superconvergence of recovered gradient by using the gradient recovery operator proposed in \cite{GuoYang2016b}.    
We present three numerical examples to
support our theoretical results.  

%
\end{document}